\newtheorem{theorem}{Theorem}
\newtheorem{lemma}{Lemma}
\newtheorem{proposition}{Proposition}
\newtheorem{corollary}{Corollary}
\theoremstyle{definition}
\newtheorem{definition}{Definition}
\newtheorem{conjecture}{Conjecture}
\newtheorem{remark}{Remark}
\newtheorem{question}{Question}
\newcommand{\R}{\mathbb{R}}
\newcommand{\N}{\mathbb{N}}
\newcommand{\Z}{\mathbb{Z}}
\title{On Some Properties of Accessible Sets}
\author{Oscar Quester}
\date{}
\begin{document}

\begin{abstract}

A set $D \subseteq \N$ is called $r$-large if every $r$-coloring of $\N$ admits arbitrarily long monochromatic arithmetic progressions $a,a+d,...,a+(k-1)d$ with gap $d \in D$. Closely related to largeness is accessibility; a set $D \subseteq \N$ is called $r$-accessible if every $r$-coloring of $\N$ admits arbitrarily long monochromatic sequences $x_1,x_2,...,x_k$ with $x_{i+1}-x_{i} \in D$. It is known that if $D \subseteq \N$ is $2$-large, then the gaps between elements in $D$ cannot grow exponentially. In this paper, we show that if $D$ is $2$-accessible, then the gaps between elements in $D$ cannot grow much faster than exponentially. Additionally, we show that the notion of accessibility is equivalent to that of topological recurrence. 

\end{abstract}
\maketitle

\section{Introduction}

An $r$-coloring of a set $A$ is a function $\chi : A \to [r]$, where $[r]=\{1,2,\dots,r\}$. We say a subset $A' \subseteq A$ is monochromatic under $\chi$ if $\chi $ is constant on $A'$. One of the goals of Ramsey theory is to find `order' in seemingly `random' structures. For example, van der Waerden's Theorem tells us that given any $r$-coloring of the positive integers, there will exist arbitrarily long monochromatic arithmetic progressions. The theorem places no requirement on the gap (or common difference), $d$, of the arithmetic progression -- it can be any natural number. With this in mind, in \cite{brown1999set}, the authors introduce the notion of largeness.

\begin{definition}

A set $D \subseteq \N=\{1,2,3,\dots\}$ is called \textit{$r$-large} if, for every $r$-coloring of $\N$, there exist arbitrarily long monochromatic arithmetic progressions with gap $d \in D$. If $D$ is $r$-large for every $r \in \N$, we say $D$ is \textit{large}. We call an arithmetic progression with gap $d \in D$ a \textit{$D$-AP}.
    
\end{definition}

In an attempt to learn more about largeness, in \cite{landman2010avoiding}, the authors study some Ramsey properties of a larger family of sequences.

\begin{definition}

Let $D \subseteq \N$. A $k$-term sequence of integers $x_1,x_2,\dots,x_k$ satisfying $x_{i+1}-x_{i} \in D$ for all $1 \leq i \leq k-1$ is called a $k$-term \textit{$D$-diffsequence}.    

\end{definition}

\begin{definition}
    
A set $D \subseteq \N$ is called \textit{$r$-accessible} if, for every $r$-coloring of $\N$, there exist arbitrarily long monochromatic sequences $x_1,x_2,\dots,x_k$ with $x_{i+1}-x_{i} \in D$; that is, there exist monochromatic $k$-term $D$-diffsequences for every $k \geq 1$. If $D$ is $r$-accessible for every $r \in \N$, we say $D$ is \textit{accessible}. 
    
\end{definition}

Equivalently -- by a compactness argument -- a set $D \subseteq \N$ is $r$-accessible if, for every $k \geq 1 $, there exists a least positive integer $\Delta = \Delta(D,k;r)$ such that whenever $[\Delta]$ is $r$-colored, there exists a monochromatic $k$-term $D$-diffsequence. The same is true for $r$-largeness. 
Given a set $D \subseteq \N$, we define the \textit{degree of accessibility} of $D$, $\operatorname{doa}(D)$, to be the greatest positive integer $r$ for which $D$ is $r$-accessible. If $D$ is $r$-accessible for every $r \in \N$, we write $\operatorname{doa}(D)=\infty$. It follows immediately from the definitions of largeness and accessibility that any set $D \subseteq \N$ that is $r$-large is automatically $r$-accessible. Thus, all large sets are accessible; though, as shown in \cite{jungic2005conjecture}, the converse need not be true. In Section 5, we explore the connections between accessibility and topological recurrence; namely, we show that accessible sets are precisely sets of topological recurrence. This connection motivates some new questions and helps to shed light on many interesting examples of sets that are accessible (but not large). 

It is a consequence of the polynomial van der Waerden Theorem \cite{bergelson1996polynomial} that given any polynomial $p(x) \in \mathbb{Q}[x]$ with positive leading term and zero constant term, the set $\{p(n) : n \in \N\} \cap \N$ is large (and thus accessible). This gives us a nice family of sets that are large. For example, the result tells us that the set $\{n^k : n \in \N\}$ is large for every $k \geq 2$. Note that the preceding set has natural density $0$ and is large, while the set $\{2n+1 : n \geq 0\}$ of odd positive integers has natural density $1/2$ and is not even $2$-accessible (color the odd positive integers red and the even positive integers blue). This suggests that the natural density of a set does not give us much information in regards to largeness or accessibility. However, in \cite{brown1999set}, the authors show that if $D=\{d_1,d_2,d_3,\dots\}$ is a set of positive integers satisfying $d_{n+1} \geq 3d_{n}$ for all $n \geq 1$, then $D$ is not $2$-large. In Question 5.5 of \cite{clifton2024new}, the author asks if a similar result holds for $2$-accessibility: \textit{Does there exist an absolute constant $C \geq 2$ such that there is no $2$-accessible set $D = \{d_1,d_2,d_3,\dots\}$ satisfying $d_{n+1} > Cd_n$ for all $n \geq 1$? If yes, what is the smallest such $C$?} We prove the following theorem.

\begin{theorem} \label{theorem :raccgrowth}

Let $r \geq 2$ and let $D=\{d_1<d_2<d_3<\dotsb \}$ be a set of positive integers. If there exist a real number $\delta>0$ and a positive integer $N$ such that $$d_{n+1} \geq (2+(r-1)^{-1}+\delta)d_{n}$$ for all $n \geq N$, then $\operatorname{doa}(D) \leq r-1$.

\end{theorem}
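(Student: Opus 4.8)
The plan is to prove the contrapositive directly: for each $r\ge2$ and each $D$ satisfying the hypothesis I will exhibit an $r$-coloring $\chi\colon\N\to[r]$ and an integer $L=L(r)$ such that $\chi$ admits no monochromatic $D$-diffsequence of length $L$. By the compactness reformulation of $r$-accessibility recorded above, producing such a $\chi$ is exactly the assertion $\operatorname{doa}(D)\le r-1$. Writing $c:=2+(r-1)^{-1}+\delta$, the useful consequence of the hypothesis is the inequality
\[
\sum_{i<n} d_i \;\le\; \Bigl(\tfrac{r-1}{r}-\eta\Bigr)\,d_{n},
\]
valid for all large $n$ and some $\eta=\eta(r,\delta)>0$; this follows by summing the geometric series, the point being that $\tfrac1{c-1}<\tfrac{r-1}{r}$ is \emph{equivalent} to $c>2+(r-1)^{-1}$, so the slack $\delta>0$ is precisely what makes the bound strict. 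I will run the argument assuming the ratio bound, hence the displayed inequality, holds for all $n\ge1$, and at the end reduce the general case to this by a bounded modification of the coloring that accounts for the finitely many small differences $d_1,\dots,d_{N-1}$.

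For the coloring, partition $\N$ into the ``scale'' blocks $I_n:=[d_n,d_{n+1})$ for $n\ge1$ together with $I_0:=[1,d_1)$. Inside each $I_n$ cut it into $r$ consecutive ``bands'' of essentially equal length $|I_n|/r$, labelled by elements of $\Z/r$; recurse this construction inside each band, re-based to start at $0$; and let $\chi(x)\in\Z/r$ be the sum of the band-labels encountered at $x$ along the levels of the recursion. This is well defined, since the block containing $x$ shrinks strictly at each level, so the recursion terminates (bottoming out in $I_0$-type blocks). The constant $c$ is chosen so that the above inequality forces a rigidity on how a single $D$-step can interact with this block structure.

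To bound a monochromatic $D$-diffsequence $x_1<\dots<x_k$, analyse it at the top level. If all the $x_i$ lie in one block $I_n$, then, using that the band-label is nondecreasing along a block (so there are at most $r-1$ band-boundary crossings) while the remaining levels constitute a coloring of each band, one reduces by induction on recursion depth to the same problem inside a single band. Otherwise some step leaves $I_n$ for a higher block $I_{n'}$; here the displayed inequality forces such an ``escape'' to be highly constrained. An escape skipping two or more scales, and a one-scale escape out of the \emph{bottom} band of $I_n$, must be exactly $x_i\mapsto x_i+d_{n'}$: its length is at least $d_{n'-1}$ (respectively at least $(c-1)\tfrac{r-1}{r}d_n=d_n$ at the threshold, hence $>d_n$ with the slack), while $x_{i+1}<d_{n'+1}$ caps it from above, so the index pins down to $n'$; moreover one checks the image then lands in a band of $I_{n'}$ strictly below the top band. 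The remaining escapes --- one scale, out of a higher band --- leave $x_i$ near the top of $I_n$ and are handled by combining the displayed inequality with a bound on where the image may sit in $I_{n'}$. In every case the net effect is to drive the diffsequence down the recursion, and a counting argument bounding the number of escapes together with the within-block induction yields a uniform $L=L(r)$. Compactness then finishes the proof.

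The step I expect to be the main obstacle is precisely this last lemma. Two points are delicate: the escapes out of higher (non-bottom) bands, where the rigidity weakens and one needs a sharper analysis of where the image can land; and, more seriously, organising the per-level estimates into a single bound $L(r)$ that does not blow up with the (unbounded) depth of the recursion --- a deep within-block diffsequence can only become long by performing escapes, and escapes are what the geometric inequality limits, so the two halves of the argument must be balanced against each other. It is in making this balance close that the exact value $2+(r-1)^{-1}$, and the strict inequality furnished by $\delta$, are forced.
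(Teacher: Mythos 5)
Your proposal takes a genuinely different route from the paper (which builds, by a nested-interval construction, a real $\alpha$ with $\{\alpha d_n\}\in[\varepsilon,\tfrac{r-1}{r}]$ for all $n$, colors $x$ by which interval $[\tfrac{i-1}{r},\tfrac{i}{r})$ contains $\{\alpha x\}$, and then observes that along a monochromatic $D$-diffsequence the fractional part must increase by at least $\varepsilon$ at each step without wrapping, so its length is at most $\lceil (r\varepsilon)^{-1}\rceil+1$). Your preliminary computations are fine: the geometric-series bound $\sum_{i<n}d_i\le\bigl(\tfrac{r-1}{r}-\eta\bigr)d_n$ is correct and does capture where the constant $2+(r-1)^{-1}$ comes from, and the pinning-down of multi-scale escapes to a step of size exactly $d_{n'}$ checks out. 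But as written the argument has a genuine gap, and you have named it yourself: the central lemma --- that the recursive ``sum of band labels mod $r$'' coloring admits no monochromatic $D$-diffsequence longer than some $L(r)$ \emph{independent of the recursion depth} --- is asserted, not proved. The difficulty is structural, not merely technical. Because the color is a sum over levels taken mod $r$, monochromaticity does not force the band label at any single level to be constant (a change at one level can be compensated at another), so the step ``the band-label is nondecreasing along a block, hence at most $r-1$ boundary crossings, then induct on depth'' does not apply to your coloring as defined; and even if each level could be controlled separately, an induction that loses a constant factor per level cannot yield a bound uniform in the (unbounded) depth. Until the escape-counting and the within-block analysis are actually carried out and shown to balance, the proof of the key claim is missing.

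A second, smaller gap is the reduction of the general hypothesis ($d_{n+1}\ge c\,d_n$ only for $n\ge N$) to the case ``for all $n$'' by a ``bounded modification of the coloring.'' A diffsequence may interleave the finitely many small differences $d_1,\dots,d_{N-1}$ with large ones arbitrarily, and --- as the paper itself remarks in connection with unions of sets --- product colorings do not control diffsequences whose steps mix two families, so it is not clear what bounded modification you intend. The paper sidesteps this entirely: since the constructed $\alpha$ satisfies $\alpha\le\tfrac{r-1}{r d_N}$, the initial differences automatically satisfy $\{\alpha d_n\}=\alpha d_n\in[\varepsilon d_1/d_N,\tfrac{r-1}{r}]$ as well, so no separate treatment is needed. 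If you want to salvage your combinatorial scheme, you would need both to replace the summed-label coloring by one for which monochromaticity genuinely rigidifies every level (or prove the uniform bound by a different bookkeeping), and to give an honest treatment of the initial segment.
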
 

Letting $r=2$, we immediately obtain the following corollary.

\begin{corollary} \label{cor:2accgrowth}

Let $D=\{d_1<d_2<d_3<\dotsb \}$ be a set of positive integers. If there exist a real number $\delta>0$ and a positive integer $N$ such that $$d_{n+1} \geq (3+\delta)d_{n}$$ for all $n \geq N$, then $D$ is not $2$-accessible.
    
\end{corollary}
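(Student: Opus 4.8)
Write $\rho=2+(r-1)^{-1}+\delta$. By the compactness reformulation mentioned above, it suffices to exhibit a single $r$-colouring $\chi\colon\N\to[r]$ together with a finite $K$ such that every monochromatic $D$-diffsequence has at most $K$ terms; this gives $\operatorname{doa}(D)\le r-1$, and the case $r=2$ is Corollary~\ref{cor:2accgrowth}. The plan is to build $\chi$ from the multiscale geometry of $D$ and then to drive any long monochromatic diffsequence downward through the scales of $D$ until it runs out of room below $d_1$.

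I would construct $\chi$ recursively over the scales. Suppose $\chi$ has been defined on $[0,d_n)$. The interval $[d_n,d_{n+1})$ breaks into $\lceil d_{n+1}/d_n\rceil$ consecutive blocks of length $d_n$ (the top one possibly shorter); colour the $j$-th block by the copy of $\chi|_{[0,d_n)}$ with the colours cyclically shifted by an amount $s_n(j)\in\Z_r$, with $s_n(0)=0$ so that the definitions agree, and set $\chi=\lim_n\chi_n$. Unrolled, $\chi(x)$ is a sum over scales of shifts attached to the ``digits'' of $x$ in the mixed-radix system of $(d_n)$. The shift functions must be chosen so that consecutive blocks at each scale receive \emph{distinct} shifts (the first guess being $s_n(j)\equiv j\pmod r$), and so that the carry propagation caused by adding one element $d_m\in D$ — unavoidable since the $d_n$ need not divide one another — stays confined to a bounded number of scales; pinning down the right choice is the technical core of the argument.

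Now take a monochromatic $D$-diffsequence $x_0<x_1<\dots<x_L$ and let $M$ be its top scale, $d_M\le x_L-x_0<d_{M+1}$; then every gap lies in $\{d_1,\dots,d_M\}$. The diffsequence occupies a \emph{consecutive} run of scale-$M$ blocks, since skipping an intervening block would require a gap exceeding $d_M$, hence at least $d_{M+1}$, which exceeds the whole span. A transition between two consecutive scale-$M$ blocks cannot use the gap $d_M$ itself — adjacent blocks carry different colour shifts — so it uses a gap $\le d_{M-1}$, which means the diffsequence must leave the old block high up (in one of its last scale-$(M-1)$ sub-blocks) and re-enter the new one low down (in its first scale-$(M-1)$ sub-block). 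Hence each block traversed in full contains a monochromatic sub-diffsequence sweeping across several scale-$(M-1)$ blocks, and the same picture recurs one scale lower. Iterating, a sufficiently long monochromatic diffsequence would force nested monochromatic sub-diffsequences that cross block boundaries at every scale down to scale $1$; but a boundary crossing at scale $1$ needs a gap below $\min D=d_1$, which is impossible. Quantifying how much ``crossing budget'' survives each descent — a function of the number of colours $r$ available to the shifts and of the ratio $\rho$ — shows that $\rho>2+(r-1)^{-1}$ is exactly the condition forcing the budget to be used up after finitely many scales, so no monochromatic diffsequence can reach scale $M$ once $M$ is large, and $L$ is bounded by some $K=K(r,\delta,N,d_1,\dots,d_N)$; the extra $\delta$ provides the slack needed to absorb the short top block of each $[d_n,d_{n+1})$ and the finitely many scales $n<N$ where growth is not assumed.

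The hard part will be the joint calibration of the shifts $s_n(\cdot)$ with the descent estimate: the shifts must simultaneously (i) forbid a monochromatic transition by the gap $d_n$ at scale $n$, (ii) keep the carries from smaller gaps down to an $O_r(1)$ correction, and (iii) make the ``blocks swept per scale'' recursion contract by the factor governed by $\rho-2$ rather than by a cruder one, so that the sharp constant $2+(r-1)^{-1}$ emerges rather than the $3$ that the naive two-regions-per-scale colouring yields irrespective of $r$. One can also view $\chi$ dynamically, as an approximate coding of $\N$ by arcs under a circle rotation adapted to $D$ — in line with the equivalence between accessibility and topological recurrence of Section~5 — and it may be cleaner to carry out the whole analysis in that setting.
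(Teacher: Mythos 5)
Your proposal is a plan rather than a proof, and the missing pieces are exactly where the difficulty of the statement lies. You never specify the shift functions $s_n(\cdot)$, and you say yourself that ``pinning down the right choice is the technical core of the argument''; likewise the quantitative ``crossing budget'' recursion that is supposed to produce the precise threshold $2+(r-1)^{-1}$ (i.e.\ $3$ for $r=2$) is only asserted to exist. Beyond being incomplete, two of the steps as sketched are doubtful. First, carry propagation in the mixed-radix representation relative to $(d_n)$ is a property of the sequence itself, not of the colour shifts: if $x$ sits just below a scale-$M$ block boundary, adding even $d_1$ changes the digits at arbitrarily many scales, so ``choosing the shifts so that carries stay confined to a bounded number of scales'' is not something you can arrange. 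Second, the claim that a gap of exactly $d_M$ between consecutive scale-$M$ blocks must change the colour presupposes that the blocks tiling $[d_M,d_{M+1})$ are aligned translates of $[0,d_M)$; since $d_M$ need not divide $d_{M+1}$, the short top block destroys this alignment, and across such seams a $d_M$-step does not land at the ``same relative position shifted by $s_M(j+1)-s_M(j)$'', so the colour-change argument breaks. Until these points are repaired and the recursion is made quantitative, the proof does not establish the corollary (nor the theorem it specializes).

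For comparison, the paper's proof avoids all block combinatorics: using $d_{n+1}\ge(2+(r-1)^{-1}+\delta)d_n$ it builds nested intervals $I_n=\left[\frac{z_n+\varepsilon}{q_n},\frac{rz_n+(r-1)}{rq_n}\right]$ whose intersection is a single $\alpha\in\R$ with $\{\alpha d_n\}\in\left[\varepsilon_1,\frac{r-1}{r}\right]$ for all $n$, and then colours $x$ by which arc $\left[\frac{i-1}{r},\frac{i}{r}\right)$ contains $\{\alpha x\}$. Along a monochromatic $D$-diffsequence the fractional parts can only increase, by at least $\varepsilon_1$ per step and never wrapping around (because each gap contributes at most $\frac{r-1}{r}$), so its length is at most $\lceil(r\varepsilon_1)^{-1}\rceil+1$; the growth hypothesis is used only to make the nested-interval inclusion work, which is where the constant $2+(r-1)^{-1}+\delta$ enters. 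Your closing remark about recasting the colouring as a coding by arcs under a circle rotation is in fact the direction of the actual argument; carrying that out directly, rather than via the block construction, is the way to turn your sketch into a proof.
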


\begin{remark} 

We note that since the upper bound $\operatorname{doa}(D) \leq r-1$ of Theorem \ref{theorem :raccgrowth} grows with $r$, the result is only interesting for $r=2$ and other small values of $r$. In general, if $d_{n+1} \geq (1+\delta)d_n$, then $\operatorname{doa}(D) < \infty$. In particular, a simple argument shows that if $d_{n+1} \geq (2+\delta)d_n$, then $\operatorname{doa}(D) \leq 15$. See \cite{katznelson2001chromatic,peres2010two,ruzsa2002distance}, which study the chromatic number $\chi(\Z_D)$ of graphs $\Z_D=(\Z,E)$, where $(x,y) \in E$ if and only if $|x-y| \in D.$ We can similarly define $\N_D=(\N,E)$, where $(x,y) \in E$ if and only if $|x-y| \in D$. It follows immediately that if $\chi(\N_D) \leq r$ then $\operatorname{doa}(D) \leq r-1$. However, no growth rate condition is sufficient to conclude $\chi(\N_D) \leq 2$ (and thus $\operatorname{doa}(D)=1$). Indeed, take $D=\{(2k)^n : n \geq 0\}$. Then, $\{1,2k+1,4k+1,\dots,(2k)^2+1\}$ is an odd cycle in $\N_D$, and thus $\chi(\N_D) \geq 3$. 
 
\end{remark}

Corollary \ref{cor:2accgrowth} is strikingly similar to the above mentioned growth rate condition for $2$-largeness, the only difference being the added $\delta>0$. The appearance of the constant $3$ in both results seems to be a coincidence, however. As we will see in Section 2, the condition $d_{n+1} \geq 3d_n$ in the $2$-largeness result can be weakened to $d_{n+1} \geq (1+\delta)d_n$, where $\delta >0$ is any positive constant. On the other hand, the set $\{2^n : n \geq 0\}$ is $2$-accessible and satisfies $d_{n+1} \geq 2d_{n}$.

In \cite{ardal2008ramsey}, the authors ask about the degree of accessibility of the set of (positive) Fibonacci numbers $F=\{1,2,3,5,\dots\}$ and show that $2 \leq \operatorname{doa}(F) \leq 5$. Recently, in \cite{wesley2022improved}, the author lowers the upper bound, showing $\operatorname{doa}(F) \leq 3$. The author's proof of this improved upper bound uses various properties of the Fibonacci sequence to construct a $4$-coloring of $\N$ that avoids arbitrarily long monochromatic $F$-diffsequences. In this paper, we arrive at the same upper bound of $\operatorname{doa}(F) \leq 3$ using mainly the growth rate of the Fibonacci sequence.

\section{Some Background and Basic Results}

First, we note that if $D \subseteq \N$ is a finite set, then $D$ is neither $2$-large nor $2$-accessible. Indeed, letting $m=\max(D)$, the $2$-coloring $\chi : \N \to \{1,2\}$ defined by $$\chi (x)= \begin{cases}
        1 :  x \bmod{2m} \in \{1,2,\dots,m\} \\
        2 : \text{otherwise } 
    \end{cases}$$

\noindent admits no monochromatic $(m+1)$-term $D$-AP or $(m+1)$-term $D$-diffsequence. 

The following result gives us a quite powerful necessary condition for a set $D$ to be $2$-large.

\begin{theorem}[\cite{brown1999set}, Theorem 2.1]
\label{theorem :2largemultiples}

If $D \subseteq \N$ is $2$-large then $D$ contains a multiple of $m$ for every $m \in \N$. 
    
\end{theorem}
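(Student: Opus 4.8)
The plan is to prove the contrapositive: if there is an integer $m \ge 2$ such that $D$ contains no multiple of $m$, then $D$ is not $2$-large. The useful reformulation of the hypothesis is that $m \nmid d$ means exactly that $g := \gcd(d,m)$ is a \emph{proper} divisor of $m$, hence $g \le \lfloor m/2 \rfloor$ (the largest proper divisor of $m$ being $m$ divided by its smallest prime factor). So, reducing modulo $m$, the terms of any $D$-AP are confined to a coset of the proper subgroup $g\Z/m\Z$, whose $m/g$ elements are spread around $\Z/m\Z$ with all cyclic gaps equal to $g \le m/2$. A suitably balanced period-$m$ coloring should then be constant on no such coset, which will force every sufficiently long $D$-AP to be non-monochromatic.

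Concretely, I would take the period-$m$ coloring $\chi \colon \N \to \{0,1\}$ with $\chi(n) = 0$ if $(n \bmod m) < \lceil m/2 \rceil$ and $\chi(n) = 1$ otherwise, so that each color class is an interval of at most $\lceil m/2 \rceil$ consecutive residues. Fix $d \in D$ and a $D$-AP $a, a+d, \dots, a+(m-1)d$, put $g = \gcd(d,m)$, and note that the residues of its terms mod $m$ run periodically — with period $m/g \le m$ — through the coset $C = (a + g\Z)/m\Z$, a set of $m/g$ residues equally spaced with gap $g$. The key step is that $C$ is not monochromatic under $\chi$: such an equally-spaced set misses $g-1$ consecutive residues immediately after each of its elements, so it fits inside an interval of $c$ consecutive residues only when $m - c \le g - 1$; since $g \le \lfloor m/2 \rfloor < m - \lceil m/2 \rceil + 1$, neither color class (each of size at most $\lceil m/2 \rceil$) can swallow $C$. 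Hence the color sequence $j \mapsto \chi(a + jd)$ is periodic and non-constant, so it has no constant run of length $m/g$, in particular none of length $m$; thus the chosen $D$-AP is not monochromatic. Since $d$ was arbitrary, $\chi$ admits no monochromatic $m$-term $D$-AP, so $D$ is not $2$-large, which is the contrapositive.

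I expect the heart of the argument to be precisely the verification that the "balanced halves" coloring breaks up every coset $a + g\Z/m\Z$ — i.e., the little combinatorial fact about equally-spaced residues on $\Z/m\Z$, combined with the elementary observation that $\gcd(d,m)$ is a proper divisor of $m$ and hence at most $\lfloor m/2 \rfloor$. Everything else is routine bookkeeping about periodic binary sequences. One point worth flagging is that a mod-$m$ coloring cannot be expected to avoid monochromatic \emph{short} $D$-APs (e.g.\ $3$-term ones), so it is essential here that ruling out $m$-term monochromatic $D$-APs already suffices to contradict $2$-largeness.
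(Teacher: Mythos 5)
Your proof is correct. Note that the paper does not prove this statement itself: it is quoted from \cite{brown1999set}, and the paper's only comment on its proof is the remark that it follows from Theorem \ref{theorem :2largelonley}, via the observation that if $D$ contains no multiple of $m$ then $\|m^{-1}d\| \geq m^{-1}$ for all $d \in D$, which produces a $2$-coloring with no monochromatic $D$-AP of length $\lceil m/2 \rceil + 1$. Your argument uses the very same coloring (the ``balanced halves'' coloring determined by whether $\{x/m\}$ lies in $[0,1/2)$ or $[1/2,1)$), but verifies it works by a direct, self-contained computation in $\Z/m\Z$: since $g=\gcd(d,m)$ is a proper divisor of $m$, hence $g \leq \lfloor m/2 \rfloor$, the residues of an AP with gap $d$ fill a coset of $g\Z/m\Z$, and your equally-spaced-points observation shows no such coset fits inside an arc of $\lceil m/2\rceil$ consecutive residues; this is airtight, including the tight case $g=m/2$. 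The trade-offs are minor: the route through Theorem \ref{theorem :2largelonley} is shorter given that machinery and gives the slightly sharper length bound $\lceil m/2\rceil+1$ rather than $m$, while your argument is elementary, needs no Diophantine input, and makes explicit exactly why the periodic coloring breaks every coset. Your closing caveat is also handled correctly: exhibiting a $2$-coloring with no monochromatic $m$-term $D$-AP suffices to negate $2$-largeness, since $2$-largeness demands arbitrarily long monochromatic $D$-APs.
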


The next result tells us that we can remove a finite number of elements from an $r$-large set and preserve $r$-largeness. This is not in general true for accessibility; for example, the set of odd positive integers $D=\{1,3,5,7,\dots\}$ is not $2$-accessible but $D\cup\{2\}$ is $3$-accessible (\cite{landman2014ramsey}, Theorem 10.27).

\begin{theorem}[\cite{brown1999set}, Lemma 1]
\label{theorem :rlargefinite}

Let $D \subseteq \N$ be $r$-large. If $F$ is a finite set, then $D \setminus F$ is $r$-large.
    
\end{theorem}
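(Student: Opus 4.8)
The plan is to reduce the statement to removing a single element from $D$ and then to lean on Theorem~\ref{theorem :2largemultiples}, which guarantees that an $r$-large set (with $r\ge 2$) contains a multiple of every positive integer. We may assume $r\ge 2$: for $r=1$ every progression is monochromatic, so $D\setminus F$ is $1$-large as soon as it is nonempty. For the reduction, I would induct on $|F|$; the inductive step is exactly the claim that \emph{if $D$ is $r$-large and $f\in\N$, then $D\setminus\{f\}$ is $r$-large}, applied repeatedly to peel off the elements of $F$ one at a time (removing an $f\notin D$ changes nothing). So everything comes down to the single-element case.

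To prove the single-element claim I would fix an arbitrary $r$-coloring $\chi$ of $\N$ and a target length $k\ge 2$, and produce a monochromatic $k$-term AP whose gap lies in $D\setminus\{f\}$. The tempting approach is \emph{dilation} — replace $\chi$ by $n\mapsto\chi(cn)$, extract a monochromatic $D$-AP, and pull it back — but this fails, because the resulting progression has gap $cd$, which need not lie in $D$ at all. Instead I would use Theorem~\ref{theorem :2largemultiples}: since $D$ is $r$-large it is $2$-large, hence $D$ contains a multiple of $2f$, which I write as $Mf$ with $M\ge 2$; thus $Mf\in D\setminus\{f\}$ and $f\mid Mf$. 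Now apply the $r$-largeness of $D$ to the coloring $\chi$, but with the \emph{inflated} target length $(k-1)M+1$: this yields a monochromatic progression $a,a+d,a+2d,\dots,a+((k-1)M)d$ with $d\in D$. If $d\neq f$, its first $k$ terms already form a monochromatic $k$-term AP with gap $d\in D\setminus\{f\}$. If $d=f$, then the subprogression indexed by $0,M,2M,\dots,(k-1)M$ is a monochromatic $k$-term AP with gap $Mf\in D\setminus\{f\}$. Either way we are done, so $D\setminus\{f\}$ is $r$-large.

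The only real obstacle is the idea of \emph{overshooting} the target length and then thinning: by requesting from $D$'s largeness a progression of length $(k-1)M+1$ rather than $k$, one can convert a monochromatic progression carrying the forbidden gap $f$ into one carrying the gap $Mf$ simply by retaining every $M$-th term. This thinning step is precisely what forces the appeal to Theorem~\ref{theorem :2largemultiples}, since it requires a multiple of $f$ other than $f$ itself to survive inside $D$. Everything else — checking $(k-1)M+1\ge k$, that $Mf\neq f$, that a subset of a monochromatic set is monochromatic — is routine bookkeeping.
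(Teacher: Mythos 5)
Your argument is correct, but note that the paper itself gives no proof of this statement: it is quoted verbatim from Brown--Graham--Landman (\cite{brown1999set}, Lemma 1), so there is no in-paper argument to compare against. What you have produced is a self-contained derivation using only another quoted result, Theorem~\ref{theorem :2largemultiples}: reduce to deleting a single element $f$, use $2$-largeness (which follows from $r$-largeness for $r\ge 2$) to find a multiple $Mf\in D$ of $2f$, so $M\ge 2$ and $Mf\ne f$, then overshoot to a monochromatic progression of length $(k-1)M+1$ with gap $d\in D$ and either truncate (if $d\ne f$) or keep every $M$-th term to get gap $Mf\in D\setminus\{f\}$ (if $d=f$). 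All steps check out, including the induction on $|F|$ (at each stage the reduced set is still $r$-large, so Theorem~\ref{theorem :2largemultiples} applies to it). The only cosmetic point is your $r=1$ aside: a $1$-large set is just a nonempty set, so the statement can degenerate there (e.g.\ $D$ finite and $F\supseteq D$); it would be cleaner to say the result is only of interest for $r\ge 2$, where $r$-large sets are infinite (as noted at the start of Section 2) and hence $D\setminus F\neq\emptyset$ automatically. Your remark that naive dilation of the coloring fails, because the pulled-back gap $cd$ need not lie in $D$, is also accurate and worth keeping.
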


The following tells us that if $D=D_1 \cup D_2$ is large, then at least one of $D_1$ or $D_2$ is also large. By induction, the result is easily extended to any finite union.

\begin{theorem}[\cite{brown1999set}, Theorem 2.4] 
\label{theorem :largeunion}

If $D=D_1 \cup D_2$ is large, then either $D_1$ or $D_2$ is large.
    
\end{theorem}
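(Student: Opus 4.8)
The plan is to prove the contrapositive by the standard \emph{product coloring} trick. Suppose neither $D_1$ nor $D_2$ is large. Then there exist positive integers $r_1,r_2$ with $D_1$ not $r_1$-large and $D_2$ not $r_2$-large. Unpacking the negation of the definition of $r$-largeness (no compactness needed), we obtain an $r_1$-coloring $\chi_1:\N\to[r_1]$ and a threshold $k_1$ such that $\chi_1$ admits no monochromatic $k_1$-term $D_1$-AP, and similarly an $r_2$-coloring $\chi_2:\N\to[r_2]$ and a threshold $k_2$ for $D_2$.

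Next I would form the coloring $\chi:\N\to[r_1]\times[r_2]$ defined by $\chi(n)=(\chi_1(n),\chi_2(n))$, which is an $(r_1r_2)$-coloring of $\N$, and set $k=\max(k_1,k_2)$. The key observation is that any $\chi$-monochromatic set is simultaneously $\chi_1$-monochromatic and $\chi_2$-monochromatic. Hence if $a,a+d,\dots,a+(k-1)d$ were a $\chi$-monochromatic $k$-term $D$-AP, then since $d\in D=D_1\cup D_2$ its gap lies in $D_1$ or in $D_2$; in the first case the sub-progression $a,a+d,\dots,a+(k_1-1)d$ is a $\chi_1$-monochromatic $k_1$-term $D_1$-AP, contradicting the choice of $\chi_1$, and the second case is symmetric. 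Therefore $\chi$ admits no monochromatic $k$-term $D$-AP, so $D$ is not $(r_1r_2)$-large, and in particular $D$ is not large. This completes the contrapositive, and an immediate induction on the number of parts gives the stated extension to any finite union $D=D_1\cup\dots\cup D_m$ (multiply the finitely many failing colorings, take the maximum of their thresholds).

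There is no serious obstacle here; the one point to be careful about is that the hypothesis is full largeness (being $r$-large for \emph{every} $r$), not $r$-largeness for a fixed $r$. The product coloring consumes $r_1r_2$ colors, so the conclusion only contradicts largeness, not $r$-largeness for small $r$ — indeed the analogous statement with a fixed number of colors fails. One could instead phrase everything through the compactness reformulation using the $\Delta$-type functions, but that only adds bookkeeping and changes nothing essential.
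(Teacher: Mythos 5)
Your product-coloring argument is correct, and it is exactly the argument the paper itself gestures at: the result is cited from Brown--Graham--Landman, and the remark following Corollary \ref{cor:3} sketches precisely this product coloring $\chi(n)=(\chi_1(n),\chi_2(n))$, noting it works for APs (though not for diffsequences). Your added care about negating largeness without compactness and about the conclusion only contradicting full largeness (not $r$-largeness for fixed $r$) is sound.
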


We also have the previously mentioned growth rate condition for $2$-largeness.

\begin{theorem}[\cite{brown1999set}, Theorem 2.2]
\label{theorem :2largegrowthweak}
Let $D=\{d_1,d_2,d_3,\dots\}$ be a set of positive integers such that $d_{n+1} \geq 3d_{n}$ for all $n \geq 1$. Then $D$ is not $2$-large.
    
\end{theorem}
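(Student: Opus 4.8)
The plan is to exhibit, for any set $D=\{d_1<d_2<\dots\}$ with $d_{n+1}\geq 3d_n$ for all $n$, an explicit $2$-coloring of $\N$ that admits no arbitrarily long monochromatic $D$-AP; in fact one can hope to bound the length of monochromatic $D$-APs by a small absolute constant. The natural device is a coloring that depends on the position of a number relative to the ``scales'' $d_n$. Concretely, I would try a coloring of the form $\chi(x) = \varepsilon_n$ where $n$ is chosen so that $d_n \le x < d_{n+1}$ (with a separate rule for $x < d_1$), and $\varepsilon_n \in \{1,2\}$ is a sequence of signs to be chosen — either a fixed alternating pattern, or something like $\chi(x)=\lfloor \log\text{-type quantity}\rfloor \bmod 2$. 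The point of the hypothesis $d_{n+1}\geq 3 d_n$ is that the gap $d$ of a putative $D$-AP $a, a+d, \dots, a+(k-1)d$ is one of the $d_j$, and since consecutive terms differ by exactly $d_j$, while the scales $d_n$ themselves spread out geometrically with ratio at least $3$, the terms of the progression cannot stay within a single scale-band for long, nor cross many bands in a controlled monochromatic way.

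The key steps, in order, would be: (1) fix the gap $d = d_j$ of a hypothetical long monochromatic $D$-AP and locate the smallest scale $d_m$ with $d_m > d$, noting $m \ge j+1$ and hence (by the ratio-$3$ hypothesis applied upward) $d_m \geq 3^{m-j} d$, while also $d_{m-1} \le d$ so $d \le d_{m-1} < d_m \le 3 d_{m-1} \le 3d$ — wait, more carefully, $d_{m-1}\le d$ gives $d_m\le$ nothing directly, so instead I would use $d_{j+1}\ge 3d_j$, i.e. $d_{j+1}\ge 3d$, and compare the progression's total span $(k-1)d$ to the locations of scales $d_{j+1}, d_{j+2},\dots$; (2) observe that as the progression marches by steps of size $d$, it enters the band $[d_n, d_{n+1})$ and the number of its terms lying in that band is at most $\lceil (d_{n+1}-d_n)/d\rceil \le d_{n+1}/d + 1$, but also the band $[d_{j+1},d_{j+2})$ has length $\ge 2d_{j+1}\ge 6d$, so terms can be forced to occupy $\Theta(d_{n+1}/d)$ consecutive positions within one band and then jump to the next; (3) choose the sign sequence $\varepsilon_n$ so that whenever the progression is confined to one band it is monochromatic-of-length-at-most-some-constant is impossible, or rather: design $\varepsilon_n$ so that consecutive bands get different colors and show a monochromatic AP must lie in a single band, then bound its length there by the band being ``too short'' relative to $d$ near $n=j$ and the progression ``escaping'' the large bands. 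Honestly the cleanest route is probably the original one from \cite{brown1999set}: color $x$ by the parity of $n$ where $d_n \le x < d_{n+1}$, then show a monochromatic $D$-AP with gap $d_j$ can contain at most (two?) terms.

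Here is the crux of why the constant $3$ enters: if $x$ and $x+d_j$ have the same color, they lie in bands $[d_a,d_{a+1})$ and $[d_b,d_{b+1})$ with $a\equiv b \pmod 2$, so $b\ge a+2$ or $b=a$; if $b=a$ then $d_j < d_{a+1}-d_a$ is forced only sometimes, whereas if $b \ge a+2$ then $x+d_j \ge d_{a+2} \ge 3 d_{a+1} > 3x$ wait we need $d_{a+1} > x$ is false ($x \ge d_a$, $x < d_{a+1}$), so $x + d_j \ge d_{a+2} \ge 3 d_{a+1} > 3x$? We only know $x < d_{a+1} \le 3 d_a \le 3x$, hmm that gives $x+d_j > 3x$ is consistent but not contradictory. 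The actual argument must track three consecutive same-colored terms $x, x+d_j, x+2d_j$ and derive that the band indices are forced to increase by exactly $2$ each time, then get a contradiction from the geometric growth forcing $d_j$ to both be small (to fit in early bands) and large (because $d_j \ge d_{a+1}-d_{a}$-ish once we're past band $j$). I expect \textbf{the main obstacle} to be precisely this bookkeeping: pinning down exactly how many terms of the $D$-AP with gap $d_j$ can share a color, and verifying that the exponential separation with ratio $3$ (as opposed to $2$) is what makes the ``jump by two bands'' scenario collapse. Once the per-color length is bounded by an absolute constant (the literature value is $2$, giving that no monochromatic $3$-term $D$-AP exists), $2$-non-largeness follows immediately since van der Waerden-type arbitrarily long progressions are ruled out.
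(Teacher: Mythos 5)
Your proposal does not close the argument, and the family of colorings you commit to cannot close it. Everything you try is of the form ``choose a sign $\varepsilon_n\in\{1,2\}$ and give every $x$ with $d_n\le x<d_{n+1}$ the color $\varepsilon_n$,'' i.e.\ a coloring that is \emph{constant on each band} $[d_n,d_{n+1})$. Such a coloring always admits arbitrarily long monochromatic $D$-APs, no matter how the signs are chosen: since $d_{n+1}\ge 3d_n$, the band $[d_n,d_{n+1})$ has length $d_{n+1}-d_n\ge 2d_n$, so for the fixed gap $d_1\in D$ the progression $d_n,\,d_n+d_1,\,d_n+2d_1,\dots$ stays inside that single band for at least $2d_n/d_1$ terms, and this count tends to infinity with $n$; all of these terms automatically receive the band's color. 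You in fact notice this in your step (2) (``$\Theta(d_{n+1}/d)$ terms within one band''), but your step (3) hopes to make a monochromatic AP confined to one band short, which is exactly what cannot be done when the coloring does not vary inside the band. The ``crux'' paragraph then ends with computations you yourself flag as ``consistent but not contradictory'' and with the decisive bookkeeping left as an expectation, so no bound on the monochromatic length is ever established. The geometric growth of the $d_n$ is not by itself the mechanism; the coloring must oscillate \emph{within} each band at a scale commensurate with the gaps.

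For comparison, the paper does not reprove this cited result of Brown--Graham--Landman, but its machinery gives the proof you were reaching for, and it is Diophantine rather than band-parity. One uses the ratio-$3$ hypothesis in a nested-interval construction (exactly as in the paper's proof of Theorem~\ref{theorem :raccgrowth}, or via Theorem~\ref{theorem :lacunary}) to produce $\alpha\in\R$ and $\varepsilon>0$ with $\|\alpha d_n\|\ge\varepsilon$ for all $n$, and then colors $x$ by whether $\{\alpha x\}\in[0,\tfrac12)$ or $[\tfrac12,1)$; Theorem~\ref{theorem :2largelonley} then bounds every monochromatic $D$-AP by $\lceil 1/2\varepsilon\rceil+1$ terms, so $D$ is not $2$-large (and indeed Theorem~\ref{theorem :2largegrowthstrong} shows lacunarity alone, $d_{n+1}\ge(1+\delta)d_n$, already suffices). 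If you want to salvage your outline, replace the band-constant coloring by a coloring of this type, where the color of $x$ depends on the position of $\alpha x$ modulo $1$; the growth hypothesis is what lets you build $\alpha$, not what lets you count band crossings.
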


As previously mentioned, the condition of Theorem \ref{theorem :2largegrowthweak} can be weakened. To do so, we introduce some new notation. A sequence of positive integers $\mathcal{S}=(m_n)_{n=1}^{\infty}$ is said to be \textit{lacunary} if there exists $\lambda>1$ such that $m_{n+1} \geq \lambda m_n$ for all $n \geq 1$ (if the elements of $D \subseteq \N$ form a lacunary sequence we will also call $D$ lacunary). Let $\{x\}=x-\lfloor x \rfloor$ denote the fractional part of $x \in \R$. Let $\|x\|=\min (\{x\},1-\{x\})$, that is, the distance from $x \in \R$ to the nearest integer. It has long been known that if $\mathcal{S}=(m_n)_{n=1}^{\infty}$ is lacunary then there exist $\alpha \in \R$ and $\varepsilon>0$ such that $\inf_{n \in \N}\|\alpha m_n\|=\varepsilon$ (this was first proven in 1926 by Khintchine \cite{khintchine1926klasse}, forgotten, and then subsequently proven by both Pollington and Mathan in the late 1970s \cite{de1980numbers,pollington1979density}). More recently, Peres and Schlag proved the following.

\begin{theorem}[\cite{peres2010two}, Theorem 3.1] 
\label{theorem :lacunary}

Let $\mathcal{S}_1,\mathcal{S}_2,\dots,\mathcal{S}_\ell$ be lacunary sequences and let $\mathcal{S}=\mathcal{S}_1 \cup \mathcal{S}_2 \cup \dotsb \cup \mathcal{S}_\ell.$ Then, there exist $\alpha \in \R$ and $\varepsilon>0$ such that $$\inf_{s \in\mathcal{S}}\|\alpha s\|=\varepsilon.$$
    
\end{theorem}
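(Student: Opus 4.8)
My plan is to recast the assertion as a question in Diophantine approximation and then build $\alpha$ by hand. Since $\|\alpha s\|$ has period $1$ in $\alpha$, and since any uniform bound $\|\alpha s\|\ge\varepsilon_0>0$ valid for all $s\in\mathcal S$ makes $\inf_{s\in\mathcal S}\|\alpha s\|$ itself a positive real (which we then name $\varepsilon$), it suffices to produce a single $\alpha\in[0,1)$ and a single $\varepsilon\in(0,\tfrac12)$ with $\|\alpha s\|\ge\varepsilon$ for every $s\in\mathcal S$. Write $\lambda=\min_i\lambda_i>1$ for a common lacunarity constant; then each interval $[x,\lambda x)$ meets every $\mathcal S_i$ at most once, hence meets $\mathcal S$ in at most $\ell$ points. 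For an integer $s\ge1$ put $B_s=\{\alpha\in[0,1):\|\alpha s\|<\varepsilon\}$, a union of $s$ equally spaced intervals of length $2\varepsilon/s$ with separating gaps of length $(1-2\varepsilon)/s$; the task is to land outside $\bigcup_{s\in\mathcal S}B_s$.

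The natural attempt is a nested-interval construction. Enumerate $\mathcal S=\{s_1<s_2<\cdots\}$, fix a parameter $\Lambda>1$ (to be chosen large in terms of $\ell$ and $\lambda$), split $\mathcal S$ into the scale-blocks $\mathcal B_k=\mathcal S\cap[\Lambda^k s_1,\Lambda^{k+1}s_1)$ — each of size at most $\ell\lceil\log_\lambda\Lambda\rceil$ — and build closed intervals $J_0\supseteq J_1\supseteq\cdots$ with $J_{k+1}$ disjoint from $B_s$ for all $s\in\mathcal B_k$; then $\alpha:=\bigcap_kJ_k$ does the job. To pass from $J_k$ to $J_{k+1}$: as long as $|J_k|$ exceeds a couple of periods of the largest element $s^\ast$ of $\mathcal B_k$, the set $B_{s^\ast}$ has a full interior gap $G\subseteq J_k$; inside $G$ every other (coarser) element of $\mathcal B_k$ has at most two bad intervals, so for $\varepsilon$ small one can delete all the bad intervals of $\mathcal B_k$ from $G$ and retain a subinterval $J_{k+1}$ with $|J_{k+1}|\gtrsim|G|/|\mathcal B_k|$.

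The crux — and the genuine content of the theorem — is that this bookkeeping does not close on its own. Deleting the bad intervals of a block of size $M$ cuts $G$ into on the order of $M$ comparable pieces, so one keeps only an $\approx1/M$ fraction at each stage, whereas the scale advances only by the factor $\Lambda$ between consecutive blocks; since one needs $|J_{k+1}|$ to stay comparable to $1/\max(\mathcal B_{k+1})$ merely to begin the next step, and consecutive blocks can have nearly equal top elements, the interval lengths deteriorate faster than the scale and the construction stalls. Overcoming this is exactly the Peres--Schlag method: one must not pass block-by-block greedily, but rather distribute the cost of each block over the $\approx\log_\lambda\Lambda$ octaves it spans and choose the refinements far more carefully, so that the per-stage loss is absorbed — this is where the full force of lacunarity (controlling how the bad intervals of finer scales interleave the gaps of coarser ones) enters. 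Two alternatives I would keep in reserve: (i) show that for small $\varepsilon$ each single-sequence set $K^{(i)}_\varepsilon=\{\alpha:\|\alpha m\|\ge\varepsilon\ \forall m\in\mathcal S_i\}$ is a Cantor set whose Newhouse thickness tends to $\infty$ as $\varepsilon\to0$, and intersect the $\ell$ of them via the gap lemma — the delicate point being that thickness is not preserved under intersection, so one must use the common scale structure of the $K^{(i)}_\varepsilon$ rather than the gap lemma as a black box; and (ii) for the base case $\ell=1$, a modified Riesz-type product $\prod_{m\in\mathcal S_1}g(mx)$ with $g\ge0$ a trigonometric polynomial supported away from $\Z$, whose total mass stays equal to $1$ by lacunarity and is therefore a probability measure carried by $K^{(1)}_\varepsilon$.
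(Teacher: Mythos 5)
Your proposal does not actually prove the theorem: after setting up the natural reduction (find one $\alpha$ with $\|\alpha s\|\ge\varepsilon$ for all $s\in\mathcal{S}$) and the nested-interval scheme, you state yourself that the bookkeeping ``does not close on its own'' and that overcoming the stall ``is exactly the Peres--Schlag method'' --- but you never execute that method. That is precisely the content of the statement; everything before it (periodicity in $\alpha$, at most $\ell$ elements of $\mathcal{S}$ per multiplicative window $[x,\lambda x)$, the structure of the bad sets $B_s$) is routine. The missing step is the quantitative induction in which, at each dyadic scale, the newly forbidden intervals remove only a bounded proportion of each surviving interval from the previous stage --- with $\varepsilon$ chosen small in terms of $\lambda$ and $\ell$ so that lacunarity limits both the number and the total length of new deletions relative to the gaps already secured --- so that a point of the intersection survives. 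Your two fallback ideas (Newhouse thickness of the Cantor sets $K^{(i)}_\varepsilon$ and a Riesz-product measure) are likewise only named, and you yourself flag the unresolved obstacles in each (thickness is not stable under intersection of $\ell$ sets; the Riesz-product route is only sketched for $\ell=1$). An honest description of why the greedy argument fails, however accurate, is not a proof.

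For calibration: the paper does not prove this result either --- it is imported verbatim as Theorem 3.1 of Peres and Schlag \cite{peres2010two} and then used as a black box (together with Theorem \ref{theorem :2largelonley}) to deduce Theorem \ref{theorem :2largegrowthstrong}. So there is no argument in the paper your sketch could be measured against or patched from; to make your proposal a proof you would have to reproduce the Peres--Schlag construction (or a genuine alternative such as a completed thickness/Riesz-product argument), not merely point to it.
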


The next result was first proven in \cite{host2016variations} (Lemma 7.4) and the following quantitative version of it appears in \cite{farhangi2021distance}.

\begin{theorem}[\cite{farhangi2021distance}, Theorem 2]
\label{theorem :2largelonley}

Let $D \subseteq \N$ be a set of positive integers such that there exist $\alpha \in \R$ and $\varepsilon>0$ satisfying $\|\alpha d\| \geq \varepsilon$ for all $d \in D$. Then, there exists a $2$-coloring of $\N$ that admits no monochromatic $D$-AP of length $\ell=\lceil 1/2\varepsilon \rceil +1$.
    
\end{theorem}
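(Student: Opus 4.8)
The plan is to build the $2$-coloring directly from the rotation $x \mapsto \alpha x$ on the circle $\R/\Z$ and to extract the forbidden length $\ell=\lceil 1/(2\varepsilon)\rceil+1$ from the separation hypothesis $\|\alpha d\|\ge \varepsilon$. Define $\chi:\N\to\{0,1\}$ by $\chi(x)=0$ if $\{\alpha x\}\in[0,1/2)$ and $\chi(x)=1$ otherwise; in other words, $\chi(x)$ records which of the two half-open semicircles $[0,1/2)$, $[1/2,1)$ contains the point $\alpha x \bmod 1$. I will show that no $D$-AP of length $\ell$ is monochromatic under $\chi$. So suppose for contradiction that $a,a+d,\dots,a+(\ell-1)d$ is monochromatic with $d\in D$, and set $p_j=\{\alpha(a+jd)\}$ for $0\le j\le \ell-1$; then all $p_j$ lie in one common half-open interval $I$ of length $1/2$.

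The crux of the argument is to show that, \emph{because the whole run is confined to one semicircle}, consecutive images differ by one and the same real number, with no wrap-around. Indeed $p_{j+1}-p_j\equiv \alpha d \pmod 1$, while $p_j,p_{j+1}\in I$ forces $p_{j+1}-p_j\in(-1/2,1/2)$; hence $p_{j+1}-p_j$ equals the unique representative $r\in(-1/2,1/2)$ of $\alpha d$ modulo $1$, a quantity independent of $j$ and satisfying $|r|=\|\alpha d\|\ge \varepsilon$. Telescoping, $p_j=p_0+jr$ holds exactly as an identity of real numbers (no reduction mod $1$ is ever needed, since each $p_j$ already lies in $I$), so $p_0,p_0+r,\dots,p_0+(\ell-1)r$ is a genuine, non-cyclic arithmetic progression of reals all lying inside $I$.

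It then remains to compare the spread of this progression against the length of $I$. Since the $\ell$ points lie in a half-open interval of length $1/2$, their spread is strictly less than $1/2$, giving $(\ell-1)\varepsilon\le (\ell-1)|r|<1/2$, hence $\ell-1<1/(2\varepsilon)$. But $\ell-1=\lceil 1/(2\varepsilon)\rceil\ge 1/(2\varepsilon)$, a contradiction. I expect the only real obstacle to be the structural step of the second paragraph---verifying that monochromaticity genuinely rules out any cyclic wrap and so converts the rotation orbit into an honest arithmetic progression in $\R$; once that is in hand, the counting is immediate, and the \emph{strict} inequality coming from the half-open interval is exactly what makes the bound $\ell$ sharp even when $1/(2\varepsilon)$ happens to be an integer.
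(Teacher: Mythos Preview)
Your argument is correct. The paper does not supply its own proof of this theorem---it is quoted from \cite{farhangi2021distance}---but the $2$-coloring you construct, $\chi(x)=1$ if $\{\alpha x\}\in[0,1/2)$ and $\chi(x)=2$ otherwise, is precisely the coloring the paper invokes when it applies this theorem in Propositions~\ref{cor:fibAP} and~\ref{cor:fibdiff}, so your approach is exactly the intended one. The key observation, that confinement of all $p_j$ to a single half-open arc of length $1/2$ forces every consecutive difference $p_{j+1}-p_j$ to equal the same signed residue $r$ of $\alpha d$ with $|r|=\|\alpha d\|$, is handled cleanly; the only microscopic edge case is $\{\alpha d\}=1/2$, where no representative of $\alpha d$ lies in the open interval $(-1/2,1/2)$, but there your two constraints $p_{j+1}-p_j\equiv 1/2\pmod 1$ and $p_{j+1}-p_j\in(-1/2,1/2)$ already clash at $j=0$, so the contradiction simply arrives one step earlier.
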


\begin{remark}

Theorem \ref{theorem :2largelonley} implies Theorem \ref{theorem :2largemultiples} since if $D \subseteq \N$ contains no multiple of $m \in \N$, then $\|m^{-1}d\| \geq m^{-1}$ for all $d \in D$. 
    
\end{remark}

As a consequence of Theorems \ref{theorem :lacunary} and \ref{theorem :2largelonley}, we obtain the following significant improvement to Theorem \ref{theorem :2largegrowthweak}.

\begin{theorem}[\cite{farhangi2021distance}]
\label{theorem :2largegrowthstrong}

Let $D_1,D_2,\dots,D_n \subseteq \N$ be lacunary sets and let $D=\bigcup_{i=1}^{n}D_i$. Then, $D$ is not $2$-large. 
    
\end{theorem}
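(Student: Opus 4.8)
The plan is to combine the two structural results quoted just above: Theorem~\ref{theorem :lacunary} (Peres--Schlag) produces a single real $\alpha$ whose multiples stay boundedly away from the integers against \emph{every} element of $D$, and Theorem~\ref{theorem :2largelonley} converts such a Diophantine separation property into an explicit $2$-coloring of $\N$ with no long monochromatic $D$-AP. Since ``not $2$-large'' only requires exhibiting one $2$-coloring with bounded monochromatic $D$-APs, this chain of implications is all that is needed.

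First I would arrange the hypotheses so that Theorem~\ref{theorem :lacunary} applies verbatim. For each $i \in [n]$, enumerate $D_i$ in increasing order as a sequence $\mathcal{S}_i$; by hypothesis each $D_i$ is lacunary, so each $\mathcal{S}_i$ is a lacunary sequence. (If some $D_i$ is finite this is automatic, as distinct positive integers have consecutive ratios at least $1+1/\max(D_i)>1$; alternatively, if $D$ itself is finite one may invoke the coloring from the start of Section~2 and be done.) Then $\mathcal{S} := \mathcal{S}_1 \cup \dots \cup \mathcal{S}_n$ enumerates the set $D$, and Theorem~\ref{theorem :lacunary} furnishes $\alpha \in \R$ and $\varepsilon>0$ with $\inf_{s \in \mathcal{S}} \|\alpha s\| = \varepsilon$; in particular $\|\alpha d\| \geq \varepsilon$ for every $d \in D$.

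Next I would feed this pair $(\alpha,\varepsilon)$ into Theorem~\ref{theorem :2largelonley}, which yields a $2$-coloring of $\N$ admitting no monochromatic $D$-AP of length $\ell = \lceil 1/2\varepsilon \rceil + 1$. As $\ell$ is a fixed finite number, this single coloring shows that $\N$ can be $2$-colored with no arbitrarily long monochromatic $D$-AP, i.e.\ $D$ is not $2$-large, which is the claim.

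As for the main obstacle: at the level of this paper there essentially is none --- the statement is a two-step deduction from cited theorems, and the only thing to check carefully is the bookkeeping that ``lacunary set'' (a set whose increasing enumeration is a lacunary sequence) is exactly the input Theorem~\ref{theorem :lacunary} wants, and that the union of the enumerating sequences has underlying set $D$. The genuine difficulty lives inside Theorem~\ref{theorem :lacunary} itself --- producing one common $\alpha$ for a finite union of lacunary sequences --- which already for a single lacunary sequence is the classical theorem of Khintchine, rediscovered by Pollington and de Mathan; and inside Theorem~\ref{theorem :2largelonley}, the Host--Kra / Farhangi construction of the separating coloring. If a self-contained argument were wanted, those are the two places where the real work would have to be reproduced.
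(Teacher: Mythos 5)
Your proposal is correct and follows exactly the route the paper intends: Theorem~\ref{theorem :2largegrowthstrong} is stated there precisely as a consequence of Theorem~\ref{theorem :lacunary} (one common $\alpha$ with $\|\alpha d\|\geq\varepsilon$ for all $d\in D$) fed into Theorem~\ref{theorem :2largelonley} (a $2$-coloring with no monochromatic $D$-AP of length $\lceil 1/2\varepsilon\rceil+1$). The bookkeeping you note about passing from lacunary sets to lacunary sequences is the only detail, and you handle it adequately.
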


The following is a corollary of the polynomial van der Waerden Theorem and gives us a nice family of large sets.

\begin{theorem}[\cite{brown1999set}, Theorem 3.1]
\label{theorem :rangeofpolynomiallarge}

Let $p(x)$ be a polynomial with integer coefficients, a positive leading term, and $p(0)=0$. Then, $\N \cap \{p(n) : n \in \N\}$ is large.

\end{theorem}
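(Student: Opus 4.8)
The plan is to derive this as an essentially immediate consequence of the polynomial van der Waerden Theorem of Bergelson and Leibman \cite{bergelson1996polynomial}. Recall that this theorem states: given any polynomials $q_1,\dots,q_m \in \Z[x]$ with $q_i(0)=0$ for all $i$, and any finite coloring of $\N$, there exist $a\in\N$ and $x\in\N$ such that $a,\ a+q_1(x),\ \dots,\ a+q_m(x)$ all receive the same color. Fix an $r$-coloring $\chi$ of $\N$ and an integer $k\geq 1$; the goal is to exhibit a monochromatic $k$-term arithmetic progression with common difference in $S:=\N\cap\{p(n):n\in\N\}$, for then — since $k$ and $\chi$ are arbitrary — $S$ is $r$-large for every $r$, i.e.\ large.

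First I would fix an integer $n_0\geq 1$ such that $p$ is positive and strictly increasing on $[n_0,\infty)$; such an $n_0$ exists because $p$ has a positive leading term. For $1\leq i\leq k-1$ put $q_i(x):=i\cdot p(n_0 x)$. Each $q_i$ lies in $\Z[x]$ and satisfies $q_i(0)=i\,p(0)=0$, so the polynomial van der Waerden Theorem applies to $q_1,\dots,q_{k-1}$ and $\chi$, yielding $a,x\in\N$ for which $a,\ a+p(n_0x),\ a+2p(n_0x),\ \dots,\ a+(k-1)p(n_0x)$ is monochromatic under $\chi$. Since $x\geq 1$ we have $n_0x\geq n_0$, so $d:=p(n_0x)$ satisfies $d\geq p(n_0)>0$ and $d=p(n_0x)\in S$. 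Thus $a,a+d,\dots,a+(k-1)d$ is a monochromatic $k$-term $S$-AP, which is what we wanted.

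The only subtlety — and the one place the argument needs more than a verbatim citation — is guaranteeing that the common difference genuinely lands in $S$ rather than merely in $\{p(n):n\in\N\}$, which can contain non-positive values (for instance when $p(x)=x^2-2x$). Applying the theorem directly to the polynomials $x\mapsto i\,p(x)$ could return a step $x$ with $p(x)\leq 0$; composing with the dilation $x\mapsto n_0x$ sidesteps this, since the property $q_i(0)=0$ is preserved by the dilation while the argument fed into $p$ is forced to be at least $n_0$, where $p$ is already positive. I expect nothing else in the proof to be delicate, and the appeal to \cite{bergelson1996polynomial} does all the real work.
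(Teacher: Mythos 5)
Your argument is correct and is essentially the route the paper intends: it states this result as a corollary of the polynomial van der Waerden Theorem (citing \cite{brown1999set}, Theorem 3.1) without supplying details, and your application of Bergelson--Leibman to the polynomials $q_i(x)=i\,p(n_0x)$ fills in exactly that derivation. The dilation $x\mapsto n_0x$ to force the common difference $p(n_0x)$ to be a positive integer is a legitimate and correctly handled point of care, so no gaps remain.
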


\begin{remark}

In the above theorem, it states that the polynomial must have integer coefficients. This is not necessary; the polynomial need only have rational coefficients. To see why, let $p(x)=a_nx^n+a_{n-1}x^{n-1}+\dotsb+a_1x$ be a polynomial with $a_n,a_{n-1},\dots,a_1 \in \mathbb{Q}$. Then, we can write $$a_n=\frac{p_n}{q_n},a_{n-1}=\frac{p_{n-1}}{q_{n-1}},\dots,a_1=\frac{p_1}{q_1}$$ where $\gcd(p_i,q_i)=1$. Let $\ell = \operatorname{lcm}(q_1,q_2,\dots,q_n).$ Then, $q(x)=p(\ell x)$ is a polynomial with integer coefficients and we can apply Theorem \ref{theorem :rangeofpolynomiallarge} to $q(x)$.
    
\end{remark}

Reguarding $r$-accessibility, the following gives us a sufficient condition.

\begin{theorem}[\cite{landman2007avoiding}, Lemma 2.1]
\label{theorem :accessiblecriteria1}

Let $c \geq 0$ and $r \geq 2$, and let $D \subseteq \N$. If every $(r-1)$-coloring of $D$ yields arbitrarily long monochromatic $(D+c)$-diffsequences, then $D+c$ is $r$-accessible.
    
\end{theorem}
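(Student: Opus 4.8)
The plan is to start from an arbitrary $r$-coloring $\chi : \N \to [r]$ and, for each $k$, exhibit a monochromatic $k$-term $(D+c)$-diffsequence. The device I would use is to examine, for a point $p \in \N$, the behaviour of $\chi$ on the translated set $p + c + D = \{p+c+d : d \in D\}$ relative to the single color $\chi(p)$: if $\chi(p+c+d) = \chi(p)$ for some $d \in D$, then $p$ has a same-colored ``successor'' $p+c+d$ reached by a jump in $D+c$; and if no such $d$ exists, then $\chi$ restricted to $p+c+D$ omits the color $\chi(p)$ and therefore uses at most $r-1$ colors. This is the one genuine idea in the proof, and I expect no serious obstacle once it is in hand.

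This dichotomy splits into two exhaustive cases. In the first, suppose there is a $p \in \N$ with $\chi(p+c+d) \neq \chi(p)$ for all $d \in D$. Then $d \mapsto \chi(p+c+d)$ is, after relabeling its image $[r]\setminus\{\chi(p)\}$, an $(r-1)$-coloring of $D$, so the hypothesis yields, for every $k$, a monochromatic $k$-term $(D+c)$-diffsequence $x_1 < x_2 < \dots < x_k$ with all $x_i \in D$. Translating by $p+c$, the sequence $p+c+x_1 < \dots < p+c+x_k$ has all terms the same color under $\chi$ and consecutive differences $x_{i+1}-x_i \in D+c$, which is what we want.

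In the second case, for every $p \in \N$ there is some $d_p \in D$ with $\chi(p+c+d_p) = \chi(p)$; then, picking any $p_1 \in \N$ and iterating $p_{n+1} = p_n + c + d_{p_n}$, one gets a strictly increasing sequence (as $c + d_{p_n} \geq 1$) that is constant in color and satisfies $p_{n+1}-p_n = c+d_{p_n} \in D+c$, i.e.\ an infinite monochromatic $(D+c)$-diffsequence, which contains $k$-term ones for every $k$. Since the two cases cover all possibilities and $\chi$ was arbitrary, $D+c$ is $r$-accessible. The only steps needing care are verifying that the dichotomy is genuinely exhaustive and that a translate of a monochromatic $(D+c)$-diffsequence lying inside $D$ remains a monochromatic $(D+c)$-diffsequence in $\N$; both are immediate from the definitions.
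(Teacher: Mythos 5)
Your proof is correct. The paper does not prove this statement at all (it is quoted from Landman--Robertson's Lemma 2.1 with a citation), and your dichotomy --- either some $p$ has no same-colored successor in $p+c+D$, which induces an $(r-1)$-coloring of $D$ whose long monochromatic $(D+c)$-diffsequences translate by $p+c$ into $\N$, or every point extends and one iterates to get an infinite monochromatic $(D+c)$-diffsequence --- is a complete and essentially the standard argument for the cited lemma, so nothing needs to be added.
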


Most useful is the case when $r=2$ and $c=0$, in which case we have the following.

\begin{corollary} \label{theorem :2acccon}

Let $D \subseteq \N$. If $D$ itself contains arbitrarily long $D$-diffsequences, then $D$ is $2$-accessible.
    
\end{corollary}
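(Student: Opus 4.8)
The plan is to read Corollary~\ref{theorem :2acccon} as the special case $r=2$, $c=0$ of Theorem~\ref{theorem :accessiblecriteria1} and deduce it immediately. There is exactly one $1$-coloring of $D$ (the constant one), and under it a ``monochromatic $(D+0)$-diffsequence'' is just a $D$-diffsequence all of whose terms lie in $D$; hence the hypothesis of Theorem~\ref{theorem :accessiblecriteria1}, that every $1$-coloring of $D$ yields arbitrarily long monochromatic $(D+0)$-diffsequences, says precisely that $D$ contains arbitrarily long $D$-diffsequences. The conclusion of Theorem~\ref{theorem :accessiblecriteria1} is then that $D+0=D$ is $2$-accessible, which is the corollary. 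On this route there is nothing to prove beyond checking that the parameters line up, so there is no real obstacle.

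For completeness I would also record a short direct argument, since it clarifies why this hypothesis is the natural one. Fix a $2$-coloring $\chi\colon\N\to\{1,2\}$ and a target length $k$. The first step is to isolate the one-step building block: ``$D$ contains a $2$-term $D$-diffsequence'' amounts to saying that there are $e_1,e_2\in D$ with $e_1+e_2\in D$, and then for any $a\in\N$ the three pairs $(a,a+e_1)$, $(a+e_1,a+e_1+e_2)$, $(a,a+e_1+e_2)$ are all $2$-term $D$-diffsequences; since two of the three points $a$, $a+e_1$, $a+e_1+e_2$ receive the same color, one of these pairs is monochromatic, which settles $k=2$. The second step is an induction on $k$ modeled on van der Waerden's theorem, in which the role of ``a fixed common difference $d$'' is taken over by a long $D$-diffsequence $s_1<\dots<s_M$ drawn from $D$ (available for every $M$ by hypothesis): translating by a base point $a$ turns each truncation into a $D$-diffsequence $(a,a+s_1,\dots,a+s_i)$, and a color-focusing argument over suitably many such translates produces monochromatic $(k-1)$-term $D$-diffsequences of both colors whose one-step extensions all meet a common focus point $b$, at which point the color of $b$ completes a monochromatic $k$-term $D$-diffsequence.

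The step I expect to be the genuine obstacle in the self-contained version is exactly this focusing induction, for the familiar reason that an arbitrary subsequence of a $D$-diffsequence need not be a $D$-diffsequence: one cannot simply pigeonhole the colors of $a,a+s_1,\dots,a+s_M$ and extract a monochromatic sub-diffsequence. Pushing it through requires choosing $M=M(k)$ large enough (a tower-type bound, as in van der Waerden's theorem) and, at each stage of the induction, choosing which of the $s_i$ to append so that every configuration one passes to is again a legitimate $D$-diffsequence. Because Theorem~\ref{theorem :accessiblecriteria1} already carries out precisely this work, the cleanest write-up is simply to cite it, and I would include the $k=2$ computation above only as a remark motivating the statement.
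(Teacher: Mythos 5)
Your primary route---specializing Theorem~\ref{theorem :accessiblecriteria1} to $r=2$, $c=0$ and observing that under the unique $1$-coloring of $D$ the hypothesis ``every $1$-coloring of $D$ yields arbitrarily long monochromatic $D$-diffsequences'' is precisely ``$D$ contains arbitrarily long $D$-diffsequences''---is exactly how the paper deduces Corollary~\ref{theorem :2acccon}, so your proof is correct and takes essentially the same approach. The supplementary ``direct'' sketch you append is not needed for the deduction (and, as you yourself note, its focusing induction is not fully worked out), but since you present it only as a motivating remark it does not affect the correctness of your argument.
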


This corollary gives us the lower bound of $2$ for the degree of accessibility of the Fibonacci numbers, $F$. Since $F=\{f_n : n \in \N\}$ and $f_n=f_{n-1}+f_{n-2}$, we have $f_{n}-f_{n-1}=f_{n-2}$ for all $n \geq 3$. Thus, the set of Fibonacci numbers contains arbitrarily long Fibonacci diffsequences, and is thus $2$-accessible by Corollary \ref{theorem :2acccon}.

Given some set $D \subseteq \N$, it is often useful to work with a subset of $D$, as opposed to $D$ itself. The next theorem is a generalization of a result that appears in \cite{landmanramsey} and \cite{wesley2022improved}.

\begin{theorem}
\label{theorem :accessiblecriteria2}
Let $A \subseteq \N$ and let $d \in \N$. Let $B=\left\{\frac{a}{d}: a \in A \right\} \cap \N$ and let $C=\{a \in A: a \equiv 0\pmod{d}\}$. Then the following hold:

\begin{enumerate}[$(i)$]

\item if $A$ is $rd$-accessible, then $B$ is $r$-accessible;

\item if $A$ is $rd$-accessible, then $C$ is $r$-accessible.

\end{enumerate}

\begin{proof}

In both cases, we will prove the contrapositive. First, assume that $B$ is not $r$-accessible. Then, there exists a positive integer $k \geq 1$ and an $r$-coloring $\chi : \N \to [r]$ of $\N$ such that $\chi$ admits no monochromatic $k$-term $B$-diffsequence. Assume, for the sake of contradiction, that $A$ is $rd$-accessible. Define the $rd$-coloring $\chi' : \N \to \{0,1,\dots,d-1\} \times [r]$ by $$\chi' (x)=\left(x\bmod{d},\chi\left(\frac{x-x\bmod{d}}{d}+1\right)\right).$$ Since $A$ is $rd$-accessible, there exists a monochromatic $k$-term $A$-diffsequence, say $X=\{x_1,x_2,\dots,x_k\}$, under $\chi'$. By definition of $\chi'$, every element of $X$ is congruent to $c\pmod{d}$ for some $c\in \{0,1,\dots,d-1\}$. Thus, $$\chi\left(\frac{x_1-c}{d}+1\right)=\chi\left(\frac{x_2-c}{d}+1\right)= \dotsb =\chi\left(\frac{x_k-c}{d}+1\right)$$ and $$\left(\frac{x_{i+1}-c}{d}+1\right)-\left(\frac{x_i-c}{d}+1\right)=\left(\frac{x_{i+1}-x_i}{d}\right) \in B$$ for all $1 \leq i \leq k-1$, a contradiction.

Now, assume that $C$ is not $r$-accessible. Then, there exists a positive integer $k \geq 1$ and an $r$-coloring $\chi : \N \to [r]$ of $\N$ such that $\chi$ admits no monochromatic $k$-term $C$-diffsequence. Assume, for the sake of contradiction, that $A$ is $rd$-accessible. Define the $rd$-coloring $\chi' : \N \to \{0,1,\dots ,d-1\} \times [r]$ by $$\chi' (x)=\left(x\bmod{d},\chi(x)\right).$$ Since $A$ is $rd$-accessible, there exists a monochromatic $k$-term $A$-diffsequence, say $X=\{x_1,x_2,\dots,x_k\}$, under $\chi'$. By definition of $\chi'$, every element of $X$ is congruent to $c\pmod{d}$ for some $c\in \{0,1,\dots,d-1\}$. Thus, we have $$\chi(x_1)=\chi(x_2)= \dotsb =\chi(x_k)$$ and $x_{i+1}-x_i \equiv 0\pmod{d}$, so $x_{i+1}-x_i \in C$ for all $1 \leq i \leq k-1$, which contradicts our assumption. \end{proof}

\end{theorem}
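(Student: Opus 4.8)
The plan is to prove both parts by contraposition, in each case converting an $r$-coloring of $\N$ that avoids sufficiently long monochromatic $B$-diffsequences (respectively $C$-diffsequences) into an $rd$-coloring of $\N$ that avoids sufficiently long monochromatic $A$-diffsequences. The common mechanism is that refining a coloring by the residue class modulo $d$ multiplies the number of colors by $d$ but forces every monochromatic $A$-diffsequence to lie inside a single residue class mod $d$, so that all of its consecutive gaps are multiples of $d$; dividing those gaps by $d$ turns the configuration into one governed by $B$ for part $(i)$ and by $C$ for part $(ii)$.

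For $(i)$, I would suppose $B$ is not $r$-accessible, fixing some $k \geq 2$ and an $r$-coloring $\chi\colon\N\to[r]$ admitting no monochromatic $k$-term $B$-diffsequence. I would then introduce the order-preserving ``compression'' map $\phi\colon\N\to\N$ given by $\phi(x)=\frac{x-(x\bmod d)}{d}+1$, which is injective on each residue class $\{c,\,c+d,\,c+2d,\dots\}$ and satisfies $\phi(x')-\phi(x)=(x'-x)/d$ whenever $x\equiv x'\pmod d$, and define the $rd$-coloring $\chi'(x)=\bigl(x\bmod d,\ \chi(\phi(x))\bigr)$. If $x_1<\dots<x_k$ were a monochromatic $k$-term $A$-diffsequence under $\chi'$, it would be constant in the first coordinate, so all $x_i$ would share a residue mod $d$; then each $x_{i+1}-x_i$ is an element of $A$ divisible by $d$, hence $(x_{i+1}-x_i)/d=\phi(x_{i+1})-\phi(x_i)\in B$, while $\chi(\phi(x_1))=\dots=\chi(\phi(x_k))$ --- a monochromatic $k$-term $B$-diffsequence for $\chi$, a contradiction. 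Hence $\chi'$ shows $A$ is not $rd$-accessible.

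For $(ii)$, I would suppose $C$ is not $r$-accessible, witnessed by some $k\geq 2$ and an $r$-coloring $\chi$, and take $\chi'(x)=\bigl(x\bmod d,\ \chi(x)\bigr)$. A monochromatic $k$-term $A$-diffsequence $x_1<\dots<x_k$ under $\chi'$ is again constant in the first coordinate, so all terms lie in one residue class mod $d$; thus each $x_{i+1}-x_i$ belongs to $A$ and is divisible by $d$, so $x_{i+1}-x_i\in C$, and $\chi(x_1)=\dots=\chi(x_k)$ yields a monochromatic $k$-term $C$-diffsequence, contradicting the choice of $\chi$; so $A$ is not $rd$-accessible.

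I do not expect a genuine obstacle here --- this is a residue-class pigeonhole argument. The one place to be careful is the bookkeeping in $(i)$: checking that $\phi$ is well defined on all of $\N$, that it translates differences by division by $d$ on each residue class, and that a multiple of $d$ lying in $A$ is indeed of the form $d\cdot b$ with $b\in B$. Part $(ii)$ is then immediate from the same idea with $\phi$ replaced by the identity map and $B$ replaced by $C$.
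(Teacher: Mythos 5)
Your proposal is correct and is essentially the same argument as the paper's: both parts proceed by contraposition, refining the given $r$-coloring by the residue class modulo $d$ to get the $rd$-coloring $\chi'(x)=\bigl(x\bmod d,\ \chi(\phi(x))\bigr)$ (with $\phi$ the identity for part $(ii)$), and then observing that a monochromatic $A$-diffsequence under $\chi'$ lies in one residue class, so its gaps divide by $d$ into $B$- (resp.\ lie in $C$-) gaps. No differences worth noting.
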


\begin{remark}

Theorem \ref{theorem :accessiblecriteria2} will be useful for showing $\operatorname{doa}(F) \leq 3$. By letting $F_E=\{f_{3n} : n \in \N\}=\{2,8,34,\dots\}$, we see that $F_E=\{f \in F : f \equiv 0 \pmod{2}\}$. Thus, if we can show $\operatorname{doa}(F_E)=1$, we will have $\operatorname{doa}(F) \leq 3$.
    
\end{remark}

\section{Proof of Theorem 1}

We begin by giving a proof of our main result, Theorem \ref{theorem :raccgrowth}. Our proof, and result, is similar in flavor to that of Theorem 2 in \cite{ruzsa2002distance}, and the same sort of nested interval construction can be found in \cite{dubickas2006fractional}. The first part of the proof uses the fact that $d_{n+1} \geq (2+(r-1)^{-1}+\delta)d_n$ to construct an $\alpha \in \R$ satisfying $\{\alpha d_n \} \in \left[\varepsilon,\frac{r-1}{r}\right]$ for all $n \in \N$, where $\varepsilon>0$ is some fixed positive constant. The second part shows that this condition on the fractional parts $\{\alpha d_n \}$ is sufficient to conclude $D$ is not $r$-accessible.

\begin{proof}[Proof of Theorem 1]

Let $D=\{d_1<d_2<d_3<\dotsb \}$ be a set of positive integers. Suppose that there exist $\delta>0$ and a positive integer $N$ such that $d_{n+1} \geq (2+(r-1)^{-1}+\delta)d_{n}$ for all $n \geq N$. Set $q_n=d_{n+N-1}$. Then, $q_{n+1} \geq (2+(r-1)^{-1}+\delta)q_n$ for all $n \geq 1$. We construct a sequence of nested intervals $I_1 \supseteq I_2 \supseteq I_3 \supseteq \dotsb$ such that $$I_n = \left[\frac{z_n+\varepsilon}{q_n},\frac{rz_n+(r-1)}{rq_n}\right],$$ where $z_n \in \N \cup \{0\}$ and $\varepsilon > 0$ is fixed. Set $$\varepsilon=\frac{\delta(r-1)}{r\left(2+(r-1)^{-1}+\delta \right)}$$ and define $I_1=\left[\frac{\varepsilon}{q_1},\frac{r-1}{rq_1}\right].$ Assume we have defined $I_1 \supseteq I_2 \supseteq \dotsb \supseteq I_k$ as above. Thus, we have $$I_k=\left[\frac{z_k+\varepsilon}{q_k},\frac{rz_k+(r-1)}{rq_k}\right].$$ Note that $I_k=I^{(1)}_k \cup I^{(2)}_k$ where $$I^{(1)}_k=\left[\frac{z_k+\varepsilon}{q_k},\frac{z_k+\varepsilon}{q_k}+\frac{1}{q_{k+1}}\right]$$ and $$I^{(2)}_k=\left[\frac{z_k+\varepsilon}{q_k}+\frac{1}{q_{k+1}},\frac{rz_k+(r-1)}{rq_k}\right].$$ Since $|I^{(1)}_k| = \frac{1}{q_{k+1}}$, there must exist $z_{k+1}\in \N$ such that $\frac{z_{k+1}}{q_{k+1}}\in I^{(1)}_k$. We claim that $$I_{k+1}=\left[\frac{z_{k+1}+\varepsilon}{q_{k+1}},\frac{rz_{k+1}+(r-1)}{rq_{k+1}}\right] \subseteq I_k.$$ 

\noindent Since $\frac{z_{k+1}}{q_{k+1}}\in I_k^{(1)}$ and $I_k=I_k^{(1)} \cup I_k^{(2)}$, the above inclusion will hold if $|I^{(2)}_k| \geq \frac{r-1}{rq_{k+1}}.$ By construction, $$|I^{(2)}_k|=\frac{(r-1)-r\varepsilon}{rq_k}-\frac{1}{q_{k+1}}.$$ Using the fact that $q_{k+1} \geq (2+(r-1)^{-1}+\delta)q_{k}$, we obtain
\begin{align*}
    \frac{(r-1)-r\varepsilon}{rq_k}-\frac{1}{q_{k+1}} & \geq \frac{\left((r-1)-r\varepsilon\right)\left(2+(r-1)^{-1}+\delta\right)}{rq_{k+1}} - \frac{1}{q_{k+1}} \\
    & = \frac{2(r-1)-2r\varepsilon + 1 -r\varepsilon(r-1)^{-1}+\delta(r-1)-r\varepsilon \delta - r}{rq_{k+1}} \\
    & = \frac{r-1}{rq_{k+1}}+\frac{\delta(r-1) - 2r\varepsilon - r\varepsilon(r-1)^{-1}-r\varepsilon \delta}{rq_{k+1}} \\ 
    & = \frac{r-1}{rq_{k+1}}+\frac{\delta(r-1)-\varepsilon r(2+(r-1)^{-1}+\delta)}{rq_{k+1}} \\
    & = \frac{r-1}{rq_{k+1}},
\end{align*} where the last equality follows from the definition of $\varepsilon.$ Now, since the $I_n$ are nested and $|I_n| \to 0$ as $n \to \infty$, we have $\bigcap_{n=1}^{\infty}I_n=\{\alpha\}$. By construction, this $\alpha \in \R$ satisfies $\{\alpha q_n\} \in \left[\varepsilon,\frac{r-1}{r}\right]$ for all $n \in \N$. Moreover, since $I_1=\left[\frac{\varepsilon}{q_1},\frac{r-1}{rq_1}\right]$, we have $\frac{\varepsilon}{d_N} = \frac{\varepsilon}{q_1} \leq \alpha \leq \frac{r-1}{rq_1}=\frac{r-1}{rd_N}$. Thus, letting $\varepsilon_1=\varepsilon(d_1/d_N)$, we have $\{\alpha d_n\} \in \left[\varepsilon_1,\frac{r-1}{r}\right]$ for all $n \in \N$.

Now, consider the $r$-coloring $\chi : \N \to [r]$ defined by $$\chi(x)=\begin{cases}
        1:\{\alpha x \} \in \left[0,\frac{1}{r}\right) \\
        2: \{\alpha x \} \in \left[\frac{1}{r},\frac{2}{r}\right) \\
        \vdots& \\
        r: \{\alpha x \} \in \left[\frac{r-1}{r},1\right).
    \end{cases}$$
We claim this coloring avoids monochromatic $\left(\lceil (r\varepsilon_1)^{-1}\rceil +1 \right)$-term $D$-diffsequences. To see this, let $k=\left(\lceil (r\varepsilon_1)^{-1}\rceil +1 \right)$ and assume, for the sake of contradiction, that $X=\{x_1,x_2,\dots,x_k\}$ is a monochromatic $k$-term $D$-diffsequence, say of color $c$. Then, $\{\alpha x_i \} \in \left[\frac{c-1}{r},\frac{c}{r}\right)$ for all $1 \leq i \leq k$. We claim $$\{\alpha x_{i+1}\}=\{\alpha(x_{i+1}-x_i)\}+\{\alpha x_i\}$$ for all $1 \leq i \leq k-1$. Indeed, assume, for the sake of contradiction, that $\{\alpha x_{i+1}\}=\{\alpha(x_{i+1}-x_i)\}+\{\alpha x_i\}-1$. Since $x_{i+1}-x_{i} \in D$ for all $1 \leq i \leq k-1$, we have $\varepsilon_1 \leq \{\alpha (x_{i+1}-x_i)\} \leq \frac{r-1}{r}$ for all $1 \leq i \leq k-1$. Thus, we have $$\{\alpha x_{i+1}\}=\{\alpha(x_{i+1}-x_i)\}+\{\alpha x_i\}-1 < \frac{r-1}{r}+\frac{c}{r}-1 = \frac{c-1}{r},$$ a contradiction with $\chi(x_{i+1})=c$. Iterating this fact, we obtain $$\{\alpha x_k\} = \{\alpha(x_{k}-x_{k-1})\}+\dotsb+\{\alpha(x_2-x_1)\}+\{\alpha x_1\} \geq (k-1)\varepsilon_1 + \frac{c-1}{r} \geq \frac{c}{r}.$$ Thus, $\chi(x_k) \not = c$, a contradiction. It follows that $\operatorname{doa}(D) \leq r-1.$ \end{proof}

\begin{remark}

The coloring in the proof of Theorem \ref{theorem :raccgrowth} can equivalently be defined as $$\chi(x)=r+\sum_{i=1}^{r-1}\left\lfloor \{\alpha x\} - \frac{i}{r}\right\rfloor=1 + \sum_{i=1}^{r-1}\left\lfloor \alpha x + \frac{i}{r}\right\rfloor-\lfloor \alpha x \rfloor.$$ In addition, during the course of our proof we obtained a new way to show a set is not $r$-accessible; indeed, we have the following. 

\begin{theorem}
\label{theorem :newraccessible}

If $D=\{d_1,d_2,d_3,\dots\}$ is a set of positive integers such that there exist $\alpha \in \R$ and $\varepsilon>0$ satisfying $\{\alpha d_n\} \in \left[\varepsilon,\frac{r-1}{r}\right]$ for all $n \in \N$, then there exists an $r$-coloring of $\N$ with no monochromatic $(\lceil (r \varepsilon)^{-1}\rceil+1)$-term $D$-diffsequence.
    
\end{theorem}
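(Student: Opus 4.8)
The plan is to extract and re-package the second half of the proof of Theorem~\ref{theorem :raccgrowth}, which already contains the entire argument. Given $\alpha \in \R$ and $\varepsilon > 0$ with $\{\alpha d_n\} \in \left[\varepsilon, \frac{r-1}{r}\right]$ for all $n$, I would define the $r$-coloring $\chi : \N \to [r]$ by setting $\chi(x) = c$ exactly when $\{\alpha x\} \in \left[\frac{c-1}{r}, \frac{c}{r}\right)$ --- equivalently, $\chi(x) = 1 + \lfloor r\{\alpha x\}\rfloor$. Put $k = \lceil (r\varepsilon)^{-1}\rceil + 1$ and assume, for contradiction, that $x_1, x_2, \dots, x_k$ is a monochromatic $k$-term $D$-diffsequence of some color $c$, so that $\{\alpha x_i\} \in \left[\frac{c-1}{r}, \frac{c}{r}\right)$ for every $1 \leq i \leq k$.

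The key step is to verify that the successive differences add to the fractional parts \emph{without a carry}, i.e.\ that $\{\alpha x_{i+1}\} = \{\alpha(x_{i+1} - x_i)\} + \{\alpha x_i\}$ for each $1 \leq i \leq k-1$, as opposed to that sum minus $1$. Since $x_{i+1} - x_i \in D$, the hypothesis gives $\{\alpha(x_{i+1}-x_i)\} \in \left[\varepsilon, \frac{r-1}{r}\right]$; if a carry did occur, then $\{\alpha x_{i+1}\} = \{\alpha(x_{i+1}-x_i)\} + \{\alpha x_i\} - 1 < \frac{r-1}{r} + \frac{c}{r} - 1 = \frac{c-1}{r}$, contradicting $\chi(x_{i+1}) = c$. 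With the carry ruled out, telescoping over $i$ yields $\{\alpha x_k\} = \{\alpha x_1\} + \sum_{i=1}^{k-1}\{\alpha(x_{i+1}-x_i)\} \geq \frac{c-1}{r} + (k-1)\varepsilon$.

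To finish, note that $k - 1 = \lceil (r\varepsilon)^{-1}\rceil \geq (r\varepsilon)^{-1}$, so $(k-1)\varepsilon \geq \frac{1}{r}$ and hence $\{\alpha x_k\} \geq \frac{c}{r}$; this forces $\chi(x_k) > c$ (or, when $c = r$, contradicts $\{\alpha x_k\} < 1$), which is the desired contradiction. I do not anticipate any real obstacle, since this is essentially a restatement of computations already carried out for Theorem~\ref{theorem :raccgrowth}; the one point requiring care is the no-carry step, where one uses both the width $\frac{1}{r}$ of the color classes and the two-sided bound $\left[\varepsilon, \frac{r-1}{r}\right]$ on $\{\alpha d\}$ --- in particular the upper bound $\frac{r-1}{r}$ rather than $1$ --- to guarantee that the carry term, if present, would push $\{\alpha x_{i+1}\}$ strictly below $\frac{c-1}{r}$, which is incompatible with $x_{i+1}$ having color $c$.
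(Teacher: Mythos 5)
Your proposal is correct and is essentially identical to the paper's own argument, which likewise obtains this theorem by extracting the second half of the proof of Theorem~\ref{theorem :raccgrowth}: the same coloring $\chi(x)=c$ when $\{\alpha x\}\in\left[\frac{c-1}{r},\frac{c}{r}\right)$, the same no-carry lemma using the upper bound $\frac{r-1}{r}$, and the same telescoping to force $\{\alpha x_k\}\geq\frac{c}{r}$. No gaps; the handling of the case $c=r$ via $\{\alpha x_k\}<1$ is a fine way to phrase the final contradiction.
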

    
\end{remark}

Using the fact that the even Fibonacci numbers satisfy the recurrence $e_{n}=4e_{n-1}+e_{n-2}$ for all $n \geq 3$ (with $e_1=2$ and $e_2=8$), it follows by Corollary \ref{cor:2accgrowth} that $\operatorname{doa}(F_E)=1$. Thus, by the remark following the proof of Theorem \ref{theorem :accessiblecriteria2}, we have $\operatorname{doa}(F) \leq 3$.

\section{Colorings Avoiding Long Monochromatic $F$-APs and $F_E$-Diffsequences}

As observed in \cite{farhangi2021distance}, Theorem \ref{theorem :2largegrowthstrong} immediately implies that the set of Fibonacci numbers $F=\{1,2,3,5,\dots\}$ is not $2$-large. This relies only on the fact that the Fibonacci sequence is lacunary, and hence does not allow one to demonstrate an explicit $2$-coloring of $\N$ that avoids arbitrarily long monochromatic $F$-APs.

In \cite{wesley2022improved}, the author constructs a quite intricate $2$-coloring of $\N$ that avoids $5$-term monochromatic $F$-APs. The proof is rather technical, however. We give an elementary proof that there exists a $2$-coloring of $\N$ that avoids $6$-term monochromatic $F$-APs.

\begin{proposition}
\label{cor:fibAP}

There exists a $2$-coloring of $\N$ that avoids $6$-term monochromatic $F$-APs. Consequently, $F$ is not $2$-large.

\begin{proof}

Let $\phi=(1+\sqrt{5})/2$, $\bar{\phi}=(1-\sqrt{5})/2$, and $\alpha=1/\sqrt{5}.$ Then, $$f_n=\alpha \phi^n - \alpha \bar{\phi}^{n}$$ for all $n \geq 0$. We note that $$\phi^{-1}f_n=f_{n-1}-\bar{\phi}^n$$ and $$\phi f_n=f_{n+1}-\bar{\phi}^n.$$ Thus, $$\sqrt{5}f_n=(\phi^{-1}+\phi)f_n=f_{n-1}+f_{n+1}-2\bar{\phi}^n$$ for all $n \geq 1$. We will also use the fact that $f_{n-1}+f_{n+1} \not \equiv 0 \pmod{8}$ for all $n \geq 1$. This can be easily verified by checking the first few cases and then noting that $f_{n} \equiv f_{n+12} \pmod{8}$ (the Fibonacci sequence is periodic modulo $m$ for every $m$; when $m=8$, the period is $12$). Direct computation gives us $$\left\| \frac{\sqrt{5}f_n}{8}\right\| > 0.16$$ for $1 \leq n \leq 4$. Then, for $n \geq 5$, we note that $$-0.014<-\frac{1}{4} \left(\frac{1-\sqrt{5}}{2}\right)^{6} \leq -\frac{\bar{\phi}^n}{4} \leq -\frac{1}{4}  \left(\frac{1-\sqrt{5}}{2}\right)^{5}<0.023.$$ 

\noindent Combining this with the fact that $$\left\{\frac{f_{n-1}+f_{n+1}}{8}\right\} \in \left\{\frac{1}{8},\frac{2}{8},\dots,\frac{7}{8}\right\}$$ for all $n \geq 1$, we have $$\left\|\frac{\sqrt{5}f_n}{8}\right\| > 0.1$$ for all $n \geq 1$. Since $6=\left\lceil \frac{1}{2(0.1)}\right\rceil+1$, it follows by Theorem \ref{theorem :2largelonley} that there exists a $2$-coloring of $\N$ with no monochromatic $6$-term $F$-AP. Indeed, the $2$-coloring $\chi: \N \to \{1,2\}$ defined by $$\chi(x)=\begin{cases}
    1 : \left\{\frac{\sqrt5}{8} x\right\} \in \left[0,\frac{1}{2}\right) \\
    2 : \left\{\frac{\sqrt5}{8} x\right\} \in \left[\frac{1}{2},1\right) 
\end{cases}$$ avoids $6$-term monochromatic $F$-APs. \end{proof}
    
\end{proposition}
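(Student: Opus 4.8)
The plan is to apply Theorem \ref{theorem :2largelonley} directly. That theorem says: if there exist $\alpha \in \R$ and $\varepsilon > 0$ with $\|\alpha d\| \geq \varepsilon$ for all $d \in D$, then there is a $2$-coloring of $\N$ avoiding monochromatic $D$-APs of length $\lceil 1/(2\varepsilon) \rceil + 1$. So it suffices to produce a single real $\alpha$ that is uniformly badly approximable along the Fibonacci sequence, with an explicit lower bound $\varepsilon$ large enough that $\lceil 1/(2\varepsilon)\rceil + 1 \leq 6$, i.e. $\varepsilon > 1/10$.

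First I would choose $\alpha = \sqrt{5}/8$ and analyze $\|\sqrt{5} f_n / 8\|$. The key identity is $\sqrt{5} f_n = (\phi + \phi^{-1}) f_n = f_{n+1} + f_{n-1} - 2\bar\phi^n$, which comes from the Binet formula $f_n = \alpha(\phi^n - \bar\phi^n)$ together with $\phi f_n = f_{n+1} - \bar\phi^n$ and $\phi^{-1} f_n = f_{n-1} - \bar\phi^n$ (check these by expanding both sides with Binet). So $\sqrt{5} f_n / 8 = (f_{n+1}+f_{n-1})/8 - \bar\phi^n/4$. The first step is to show $f_{n+1} + f_{n-1} \not\equiv 0 \pmod 8$ for all $n \geq 1$: since $f_n$ is periodic mod $8$ with period $12$, this is a finite check. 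Hence $\{(f_{n+1}+f_{n-1})/8\}$ always lands in $\{1/8, 2/8, \ldots, 7/8\}$, so it is at distance $\geq 1/8$ from $\Z$. The second step is to control the perturbation $-\bar\phi^n/4$: since $|\bar\phi| = (\sqrt5-1)/2 < 1$, for $n \geq 5$ we have $|\bar\phi^n/4| \leq |\bar\phi|^5/4 < 0.023$, small enough that adding it to something $\geq 1/8 = 0.125$ away from $\Z$ still leaves distance $> 0.1$ from $\Z$. For the finitely many small cases $1 \leq n \leq 4$ I would just compute $\|\sqrt5 f_n/8\|$ directly and verify each exceeds $0.16 > 0.1$.

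Combining, $\|\sqrt5 f_n / 8\| > 0.1$ for every $n \geq 1$, so Theorem \ref{theorem :2largelonley} applies with $\varepsilon = 0.1$, giving a $2$-coloring of $\N$ with no monochromatic $F$-AP of length $\lceil 1/(2 \cdot 0.1)\rceil + 1 = 6$; explicitly, color $x$ by whether $\{\sqrt5 x/8\}$ lies in $[0,1/2)$ or $[1/2,1)$. The ``consequently'' clause is then immediate: a set that is $2$-large would admit arbitrarily long monochromatic $D$-APs under every $2$-coloring, contradicting the existence of this coloring.

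The main obstacle is really just the bookkeeping in the perturbation estimate: one has to be careful that the two-sided bound on $-\bar\phi^n/4$ (note $\bar\phi < 0$, so the sign of $\bar\phi^n$ alternates) together with the guaranteed gap of $1/8$ actually yields a clean lower bound strictly above $1/10$, rather than merely $1/8 - 0.023$ which is comfortably fine but needs stating. There is nothing deep here — the content is entirely the algebraic identity for $\sqrt5 f_n$ plus the mod-$8$ periodicity observation — so the ``hard part'' is choosing the modulus $8$ (and multiplier $\sqrt5$) so that the resulting $\varepsilon$ clears the threshold needed for length $6$; a smaller modulus would give a worse constant and a longer guaranteed AP.
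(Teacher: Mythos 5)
Your proposal is correct and follows essentially the same argument as the paper: the choice $\alpha=\sqrt{5}/8$, the identity $\sqrt{5}f_n=f_{n-1}+f_{n+1}-2\bar{\phi}^n$, the mod-$8$ periodicity check that $f_{n-1}+f_{n+1}\not\equiv 0 \pmod 8$, the perturbation bound for $n\geq 5$ together with a direct check of the small cases, and the application of Theorem \ref{theorem :2largelonley} with $\varepsilon=0.1$. No substantive differences.
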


At the end of Section 3 we concluded that $\operatorname{doa}(F_E)=1$ using only the growth rate of the even Fibonacci numbers. Consequently, we did not demonstrate an explicit $2$-coloring of $\N$ that avoids arbitrarily long monochromatic $F_E$-diffsequences.

In \cite{wesley2022improved}, the author constructs a $4$-coloring of $\N$ that avoids $4$-term monochromatic $F$-diffsequences. We give a shorter and more elementary proof of the existence of such a coloring. We use Theorem \ref{theorem :newraccessible} to prove there exists a $2$-coloring of $\N$ that avoids $4$-term monochromatic $F_E$-diffsequences. This coloring then immediately extends (using Theorem \ref{theorem :accessiblecriteria2}) to a $4$-coloring of $\N$ that avoids $4$-term monochromatic $F$-diffsequences.

\begin{proposition}
    
\label{cor:fibdiff}

There exists a $2$-coloring of $\N$ that avoids $4$-term monochromatic $F_E$-diffsequences, where $F_E=\{2,8,34,\dots\}$ is the set of even Fibonacci numbers. Consequently, $F_E$ is not $2$-accessible.

\begin{proof}

As in Proposition \ref{lem:corr}, we let $\phi=(1+\sqrt{5})/2$, $\bar{\phi}=(1-\sqrt{5})/2$, and $\alpha=1/\sqrt{5}$ so that $$f_n=\alpha \phi^n - \alpha \bar{\phi}^n$$ for all $n \geq 0$. Next, note that $$\phi f_n=f_{n+1}-\bar{\phi}^n,$$ from which it follows that $$(1+\phi)f_n=f_{n}+f_{n+1}-\bar{\phi}^n$$ for all $n \geq 1.$ We claim that $f_{3n}+f_{3n+1} \equiv 1 \pmod{4}$ for all $n \geq 0$. This is true for $n=0$ and $n=1$. Now, assume that $k \geq 2$ and $f_{3m}+f_{3m+1} \equiv 1 \pmod{4}$ for all $m<k$. Then, using the fact that $f_{n+6} \equiv f_n \pmod{4}$, we have $f_{3k}+f_{3k+1} \equiv f_{3(k-2)}+f_{3(k-2)+1} \equiv 1 \pmod{4}$. Hence, it follows that $f_{3n}+f_{3n+1} \equiv 1\pmod{4}$ for all $n \geq 1$, that is, $$\left\{\frac{f_{3n}+f_{3n+1}}{4}\right\}=\frac{1}{4}$$ for all $n \geq 1$. Finally, we have $$-0.04 < -\frac{1}{4}  \left(\frac{1-\sqrt{5}}{2}\right)^{4} \leq -\frac{\bar{\phi}^{3n}}{4} \leq -\frac{1}{4}  \left(\frac{1-\sqrt{5}}{2}\right)^{3}<0.06$$ for all $n \geq 1$, and thus $$0.21<\left\{(1+\phi) \frac{f_{3n}}{4}\right\}<0.31$$ for all $n \geq 1$. Since $4=\left\lceil \frac{1}{2(0.21)} \right\rceil+1$, it follows by Theorem \ref{theorem :newraccessible} that there exists a $2$-coloring of $\N$ with no monochromatic $4$-term $F_E$-diffsequence. Indeed, the $2$-coloring $\chi : \N \to \{1,2\}$ defined by $$\chi(x)=\begin{cases}
    1 : \left\{\left(\frac{1+\phi}{4}\right)x\right\} \in \left[0,\frac{1}{2}\right) \\
    2 : \left\{\left(\frac{1+\phi}{4}\right)x\right\} \in \left[\frac{1}{2},1\right)
    \end{cases}$$ avoids $4$-term monochromatic $F_E$-diffsequences.  \end{proof}
    
\end{proposition}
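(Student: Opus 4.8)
The plan is to invoke Theorem \ref{theorem :newraccessible} with $r=2$, which reduces the problem to exhibiting an explicit $\alpha \in \R$ and $\varepsilon > 0$ such that $\{\alpha f\} \in [\varepsilon, 1/2]$ for every $f \in F_E = \{f_{3n} : n \geq 1\}$. The natural candidate, suggested by the Peres--Schlag philosophy (and mirroring Proposition \ref{cor:fibAP}), is to take $\alpha$ to be a small rational multiple of an eigenvalue-related quantity; here the convenient choice is $\alpha = (1+\phi)/4$, since $(1+\phi) f_n = f_n + f_{n+1} - \bar\phi^n$ by the Binet formula, so modulo $1$ the fractional part of $(1+\phi) f_{3n}/4$ is governed by the residue of $f_{3n} + f_{3n+1}$ modulo $4$ together with a vanishing error term $-\bar\phi^{3n}/4$.

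The key steps, in order, are: (1) record the Binet formula $f_n = \alpha\phi^n - \alpha\bar\phi^n$ and derive $(1+\phi)f_n = f_n + f_{n+1} - \bar\phi^n$; (2) prove by induction, using the fact that the Fibonacci sequence is periodic modulo $4$ with period $6$, that $f_{3n} + f_{3n+1} \equiv 1 \pmod 4$ for all $n \geq 1$, so that $\{(f_{3n}+f_{3n+1})/4\} = 1/4$ exactly; (3) bound the error term: since $|\bar\phi| = (\sqrt5-1)/2 < 1$, the quantity $-\bar\phi^{3n}/4$ is an alternating-sign sequence whose absolute value is maximized at $n=1$, giving a narrow interval such as $(-0.04, 0.06)$ containing it for all $n \geq 1$; (4) combine (2) and (3) to conclude $\{(1+\phi)f_{3n}/4\} \in (0.21, 0.31) \subseteq [0.21, 1/2]$ for all $n \geq 1$; (5) apply Theorem \ref{theorem :newraccessible} with $\varepsilon = 0.21$, noting $\lceil (2 \cdot 0.21)^{-1}\rceil + 1 = 4$, to obtain a $2$-coloring of $\N$ avoiding $4$-term monochromatic $F_E$-diffsequences, and write this coloring down explicitly via the threshold at $1/2$.

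The main obstacle is step (3): one must be careful that the error term $-\bar\phi^{3n}/4$, when added to $1/4$, never pushes the fractional part outside $[\varepsilon, 1/2]$ for \emph{any} $n$, including the smallest values where $|\bar\phi^{3n}|$ is largest. Because $\bar\phi^3 = (1-\sqrt5)^3/8 < 0$, the term $-\bar\phi^{3n}/4$ is positive when $n$ is odd and negative when $n$ is even, so one should check that $1/4 - |\bar\phi^3|/4 > 0.21$ (the binding case $n=1$, where the error is in fact positive) and that $1/4 + |\bar\phi^3|/4 < 1/2$; both hold comfortably since $|\bar\phi^3|/4 < 0.06$. A minor secondary point is to confirm the period of the Fibonacci sequence modulo $4$ is indeed $6$ (equivalently $f_{n+6} \equiv f_n \pmod 4$), which is a finite check. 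Once these numerical estimates are pinned down, the rest is a direct application of the already-established machinery.
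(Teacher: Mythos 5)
Your proposal follows essentially the same route as the paper: the same choice $\alpha=(1+\phi)/4$, the same identity $(1+\phi)f_n=f_n+f_{n+1}-\bar{\phi}^n$, the same induction giving $f_{3n}+f_{3n+1}\equiv 1 \pmod 4$ via the period-$6$ behavior of $F$ modulo $4$, the same window $(0.21,0.31)$, and the same application of Theorem \ref{theorem :newraccessible} with $\varepsilon=0.21$ and the threshold-$\frac12$ coloring. One detail in your step (3) is stated incorrectly, though the argument survives: the check you propose, $\frac14-\frac{|\bar{\phi}^3|}{4}>0.21$, is numerically false (the left side is about $0.191$); the point, exactly as in the paper, is that the error $-\bar{\phi}^{3n}/4$ is negative only for even $n$, where its magnitude is at most $\bar{\phi}^4/4<0.04$ (in fact at most $\bar{\phi}^6/4$), so the fractional part stays above $\frac14-0.04=0.21$, while the binding case $n=1$ only threatens the upper bound $0.31<\frac12$. (Alternatively, even the cruder $\varepsilon=0.19$ would still yield $\left\lceil \frac{1}{2\varepsilon}\right\rceil+1=4$.)
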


\section{Accessibility and Topological Dynamics} 

Many results in Ramsey theory can be reformulated in terms of topological dynamics. In fact, Bergelson and Liebman's original proof of the polynomial van der Waerden Theorem was topological \cite{bergelson1996polynomial}; only later on did Walters give a fully combinatorial proof of the result \cite{walters2000combinatorial}. In this section, we give a brief overview of some connections between accessibility and topological dynamics. For an introduction to topological dynamics, see \cite{furstenberg2014recurrence} (specifically Chapters $1,2,8$, and $9$). See also Robertson's paper \cite{robertson2020down} for a very nice overview of the connections between dynamics and largeness.

Going forward, we will use the following notation: Given a function $T: X \to X$, we let $T^n$ denote the $n$-th iterate of $T$, that is, 
$$
  T^n \;=\; \underbrace{T \circ T \circ \dotsb \circ T}_{n\text{ times}}.
$$ We also use the notation $T^nY=\{T^n(y) : y \in Y\}$ and $T^{-n}Y=\{x \in X : T^n(x) \in Y\}$ in place of $T^{n}(Y)$ and $T^{-n}(Y)$, respectively.

\begin{definition}

A \textit{topological dynamical system} is a pair $(X,T)$ consisting of a compact metric space $X$ and a homeomorphism $T : X \to X$. We say that $(X,T)$ is a \textit{minimal (topological) system} if no nontrivial closed subset of $X$ is $T$-invariant, that is, there is no closed $ \emptyset \not =K \subsetneq X$ such that $TK=K$.
    
\end{definition}

\begin{definition}

We say $D \subseteq \N$ is a \textit{set of $\ell$-topological recurrence} if for every minimal system $(X,T)$ and every (nonempty) open set $U \subseteq X$, there exists $d \in D$ such that $$U \cap T^{-d}U  \cap \dotsb \cap T^{-\ell d}U \not = \emptyset.$$ We say $D \subseteq \N$ is a \textit{set of topological recurrence} if $D$ is a set of $1$-topological recurrence, and we say $D$ is a \textit{set of multiple topological recurrence} if $D$ is a set of $\ell$-topological recurrence for every $\ell \geq 1$.
    
\end{definition}

It is well known that $D \subseteq \N$ is large if and only if $D$ is a set of multiple topological recurrence \cite{host2016variations}. It is also true that a set $D \subseteq \N$ is accessible if and only if $D$ is a set of topological recurrence. However, this latter fact has never been explicitly stated in the literature. For the sake of completeness, we provide a self-contained proof of this fact.

Before proceeding further, we make some general observations and introduce some notation. Firstly, we note that no generality is lost by working with colorings of $\Z$ instead of $\N$. This follows by a standard compactness argument and by identifying the interval $[-N,N]$ of $\Z$ with the interval $[1,2N+1]$ of $\N$. Alternatively: If every $r$-coloring of $\N$ admits arbitrarily long monochromatic $D$-diffsequences, then trivially every $r$-coloring of $\Z$ does as well. Conversely, assume every $r$-coloring of $\Z$ admits arbitrarily long monochromatic $D$-diffsequences. Given any $r$-coloring $\chi: \N \to [r]$ of $\N$, consider the $r$-coloring $\chi' : \Z \to [r]$ of $\Z$ defined by $\chi'(x)=\chi(|x|+1).$ Strictly negative or positive $k$-term monochromatic $D$-diffsequences under $\chi'$ correspond to $k$-term monochromatic $D$-diffsequences under $\chi$. Thus, any $2k$-term monochromatic $D$-diffsequence under $\chi'$ gives us a $k$-term monochromatic $D$-diffsequence under $\chi$. Since $k$ can be chosen arbitrarily, it follows that any $r$-coloring of $\N$ admits arbitrarily long monochromatic $D$-diffsequences.

Additionally, many of the results in this section are more concisely stated by considering an $r$-coloring $\chi : \Z \to [r]$ of $\Z$ as a partition $\Z=C_1 \cup C_2 \cup \dotsb \cup C_r$ of $\Z$ under the correspondence $n \in C_i$ if and only if $\chi(n)=i.$ Given a subset $D \subseteq \N$, we let $$\mathcal{DS}_k(D)=\left\{\{s_1,s_2,\dots,s_k\} \subseteq \Z : s_{i+1}-s_i \in D \text{ for all } 1 \leq i \leq k-1\right\},$$ that is, $\mathcal{DS}_k(D)$ is the set of all $k$-term $D$-diffsequences in $\Z$. Summarizing, saying $D \subseteq \N$ is $r$-accessible is equivalent to saying that for every partition $\Z=C_1 \cup C_2 \cup \dotsb \cup C_r$ of $\Z$ into $r$ subsets and every $k \geq 1$, there exists $i \in \{1,2,\dots,r\}$ and $S \in \mathcal{DS}_k(D)$ such that $S \subseteq C_i$.

We say that $\mathcal{A} \subseteq \mathcal{P}(\mathbb{Z})$ is \textit{translation invariant} if for every $A \in \mathcal{A}$ and every $n \in \mathbb{Z}$, we have $A+n \in \mathcal{A}$, where $A+n=\{a+n : a \in A\}.$ It is clear that $\mathcal{DS}_k(D)$ is translation invariant since if $S=\{s_1,s_2,\dots,s_k\}$ is a $k$-term $D$-diffsequence, then $S+n=\{s_1+n,s_2+n,\dots,s_k+n\}$ is as well. The following lemma is a topological correspondence principle of Furstenberg and Weiss \cite{furstenberg1978topological}. Essentially, given a partition $\Z=C_1 \cup C_2 \cup \dotsb \cup C_r$ of $\Z$ (i.e., an $r$-coloring of $\Z$), we can construct a minimal system $(X,T)$ and a clopen partition $\{U_i\}_{i=1}^{r}$ of $X$ that models the given partition of $\Z$.

\begin{lemma}
\label{lem:corr}

Let $\mathcal{A}$ be a translation invariant collection of finite subsets of $\Z$ and let $r \in \N$. The following are equivalent.

\begin{enumerate}[$(i)$]

    \item For any finite partition $\Z=C_1 \cup C_2 \cup \dotsb \cup C_r$ of $\Z$ into $r$ subsets, there exists $i \in \{1,2,\dots,r\}$ and $A \in \mathcal{A}$ such that $A \subseteq C_i.$

    \item Let $(X,T)$ be a topological dynamical system and let $\{U_i\}_{i=1}^{r}$ be an open cover of $X$. Then, there exists $i \in \{1,2,\dots,r\}$ and $A \in \mathcal{A}$ such that $$\bigcap_{a \in A}T^{-a}U_i \not = \emptyset.$$

    \item Let $(X,T)$ be a minimal system and let $\{U_i\}_{i=1}^{r}$ be a clopen cover of $X$. Then, there exists $i \in \{1,2,\dots,r\}$ and $A \in \mathcal{A}$ such that $$\bigcap_{a \in A}T^{-a}U_i \not = \emptyset.$$
    
\end{enumerate}

\begin{proof}

We begin by proving that $(i)$ implies $(ii)$. Let $(X,T)$ be a topological dynamical system and let $\{U\}_{i=1}^{r}$ be an open cover of $X$. Let $x \in X$ be arbitrary. Let $C_1=\{n \in \Z : T^nx \in U_1\}$ and $C_i=\{n \in \Z : T^nx \in U_i \setminus \bigcup_{k=1}^{i-1}U_k\}$ for $2 \leq i \leq r$. Then, $\Z=C_1 \cup C_2 \cup \dotsb \cup C_r$ is a partition of $\Z$, so there exists $i \in \{1,2,\dots,r\}$ and $A \in \mathcal{A}$ such that $A \subseteq C_i$. Thus, $T^ax \in U_i$ for all $a \in A$, and hence $$\bigcap_{a \in A}T^{-a}U_i \not = \emptyset.$$

Next, we note that $(ii)$ trivially implies $(iii)$ since by definition a minimal system is a topological dynamical system and a clopen cover of $X$ is an open cover of $X$.

Finally, we prove that $(iii)$ implies $(i)$. Let $\Z=C_1 \cup C_2 \cup \dotsb \cup C_r$ be a partition of $\Z$. Let $\Lambda=\{1,2,\dots,r\}$ and let $\Omega=\Lambda^\Z$ be the set of two-sided infinite words over the alphabet $\Lambda$. Define a metric on $\Omega$ by $d(x,y)=2^{-k}$, where $k={\min\{|n| : x_n \not = y_n\}}.$ This makes $\Omega$ a compact metric space. Let $T : \Omega \to \Omega$ denote the shift operator, which is defined by $Tx=y$, where $y_n=x_{n-1}$ for all $n \in \Z$. Then, $T$ is a homeomorphism (this is why we want to work with $\Z$ instead of $\N$). Let $\omega \in \Omega$ be defined by $\omega_n=i$ if and only if $n \in C_i$ and let $\overline{\mathcal{O}_T(\omega)}=\overline{\{T^n\omega : n \in \Z\}}$ denote the orbit closure of $\omega$. It follows that $\overline{\mathcal{O}_T(\omega)}$ is compact (since $\Omega$ is compact) and $T$-invariant. Thus, $(\overline{\mathcal{O}_T(\omega)},T)$ is a topological dynamical system. By considering the family $\mathcal{F}$ of nonempty closed $T$-invariant subsets of $\overline{\mathcal{O}_T(\omega)}$ ordered by inclusion, it follows by Zorn's Lemma that there exists a minimal subsystem $(X,T)$ of $(\overline{\mathcal{O}_T(\omega)},T)$ (see Chapter 1 of \cite{furstenberg2014recurrence} for details and Chapter 3 for a proof not relying on Zorn's Lemma). Consider the sets $U_i=\{x \in X : x_0=i\}$ for $1 \leq i \leq r$. The $U_i$'s are clopen and clearly $\{U_i\}_{i=1}^{r}$ covers $X$. Thus, there exists $i \in \{1,2,\dots,r\}$ and $A \in \mathcal{A}$ such that $\bigcap_{a \in A}T^{-a}U_i \not = \emptyset$. Thus, there exists $\upsilon \in X$ such that $\upsilon_{a}=i$ for all $a \in A$. Since $\upsilon \in \overline{\mathcal{O}_T(\omega)}$, there exists $m \in \Z$ such that $d(\upsilon,T^m\omega)<2^{-|\max{(A)}|}$. Thus, $\omega_{a+m}=\upsilon_a$ for all $a \in A$, so $A+m \subseteq C_i.$ Since $\mathcal{A}$ is translation invariant, $A+m \in \mathcal{A}$, and our proof is complete. \end{proof}
    
\end{lemma}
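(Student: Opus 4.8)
The plan is to prove the three conditions equivalent by running the cycle $(i)\Rightarrow(ii)\Rightarrow(iii)\Rightarrow(i)$, of which only the last arrow requires real work.

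For $(i)\Rightarrow(ii)$ I would fix a topological dynamical system $(X,T)$, an open cover $\{U_i\}_{i=1}^r$, and an arbitrary point $x\in X$, and pull the cover back along the orbit of $x$ to obtain a partition of $\Z$: put $C_1=\{n\in\Z: T^nx\in U_1\}$ and $C_i=\{n\in\Z: T^nx\in U_i\setminus\bigcup_{k<i}U_k\}$ for $i\ge 2$. Because the $U_i$ cover $X$, this really is a partition into $r$ cells, so $(i)$ supplies an index $i$ and a set $A\in\mathcal{A}$ with $A\subseteq C_i$; then $T^ax\in U_i$ for every $a\in A$, i.e. $x\in\bigcap_{a\in A}T^{-a}U_i$, which is nonempty. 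The implication $(ii)\Rightarrow(iii)$ is immediate, since a minimal system is a topological dynamical system and a clopen cover is an open cover.

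The substantive step is $(iii)\Rightarrow(i)$, the Furstenberg--Weiss correspondence principle. Given a partition $\Z=C_1\cup\dotsb\cup C_r$, I would form the full shift $\Omega=[r]^{\Z}$ with the metric $d(x,y)=2^{-\min\{|n|:x_n\ne y_n\}}$ and the two-sided shift $T$, which is a homeomorphism --- this is the point of working over $\Z$ rather than $\N$. I would encode the partition as the point $\omega$ with $\omega_n=i\iff n\in C_i$, and pass to the orbit closure $\overline{\mathcal{O}_T(\omega)}$, a compact $T$-invariant subspace and hence a topological dynamical system. Zorn's lemma (or the Zorn-free argument in Chapter 3 of \cite{furstenberg2014recurrence}) yields a minimal subsystem $(X,T)$ inside it. The cylinders $U_i=\{x\in X:x_0=i\}$ are clopen and cover $X$, so $(iii)$ gives an index $i$, a set $A\in\mathcal{A}$, and a point $\upsilon\in\bigcap_{a\in A}T^{-a}U_i$, i.e. $\upsilon_a=i$ for all $a\in A$. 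Since $\upsilon$ lies in the orbit closure of $\omega$, some shift $T^m\omega$ agrees with $\upsilon$ on every coordinate of absolute value at most $\max_{a\in A}|a|$; hence $\omega_{a+m}=\upsilon_a=i$ for all $a\in A$, so $A+m\subseteq C_i$, and translation invariance of $\mathcal{A}$ gives $A+m\in\mathcal{A}$, closing the cycle.

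I expect this last implication to be the main obstacle, for three reasons: one must invoke that the orbit closure of a single point contains a minimal subsystem; one must check that the cylinder sets genuinely form a (cl)open cover of the minimal subspace; and, most delicately, one must transfer the ``good configuration'' $\upsilon$ found in the abstract minimal system back to a translate of a member of $\mathcal{A}$ lying inside a single cell of the original partition --- which is precisely where translation invariance of $\mathcal{A}$ and the fact that points close in the shift metric agree on a long central block are both used.
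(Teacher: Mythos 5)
Your proposal is correct and follows essentially the same route as the paper: the same pullback partition for $(i)\Rightarrow(ii)$, the trivial $(ii)\Rightarrow(iii)$, and the identical Furstenberg--Weiss shift-space argument (orbit closure, minimal subsystem via Zorn, cylinder sets, and approximation in the shift metric plus translation invariance) for $(iii)\Rightarrow(i)$. Your use of $\max_{a\in A}|a|$ in the approximation step is in fact a slightly more careful version of the paper's $|\max(A)|$, but the argument is otherwise the same.
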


We are now in a position to prove that accessible sets are sets of topological recurrence and vice versa (the equivalence of $(i)$ and $(iv)$ is well known; see \cite{host2016variations} for this and more equivalencies).

\begin{theorem}
\label{theorem :recurrenceeq}
Let $D \subseteq \N$. The following are equivalent.

\begin{enumerate}[$(i)$]

\item $D$ is a set of topological recurrence.

\item Let $(X,T)$ be a minimal system and let $U \subseteq X$ be an open set. Then, for every $k \geq 1$, there exists a $k$-term $D$-diffsequence $\{s_1,s_2, \dots ,s_k\} \in \mathcal{DS}_k(D)$ such that $$U \cap T^{-s_1}U \cap \dotsb \cap T^{-s_k}U \not = \emptyset.$$

\item $D$ is accessible.

\item $D$ is chromatically intersective, that is, for any finite partition $\N=C_1 \cup C_2 \cup \dotsb \cup C_r$ of $\N$, there exists $i \in \{1,2,\dots,r\}$ and $a,b \in C_i$ such that $a-b \in D$.

\end{enumerate}

\end{theorem}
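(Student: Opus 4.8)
The plan is to establish the cycle of implications $(i)\Rightarrow(ii)\Rightarrow(iii)\Rightarrow(iv)\Rightarrow(i)$, using Lemma \ref{lem:corr} (applied with $\mathcal{A}=\mathcal{DS}_k(D)$ for suitable $k$) as the bridge between the combinatorial statements $(iii)$, $(iv)$ and the dynamical ones, together with the standard fact that in a minimal system every nonempty open set has finitely many $T$-translates covering $X$. I would also freely use the reduction discussed above that lets one pass between colorings of $\N$ and colorings of $\Z$.

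The heart of the argument is $(i)\Rightarrow(ii)$, which I would prove by induction on $k$, normalizing every $D$-diffsequence to start at $0$. Given a minimal $(X,T)$ and a nonempty open $U\subseteq X$, the inductive hypothesis produces a $D$-diffsequence $0=s_1<s_2<\dots<s_k$ with $V:=\bigcap_{j=1}^{k}T^{-s_j}U\neq\emptyset$; since $V$ is open, applying $(i)$ to the same system $(X,T)$ and the set $V$ yields $d\in D$ and a point $x\in V\cap T^{-d}V$. The key observation is that $0,\,d,\,d+s_2,\,\dots,\,d+s_k$ is again a $D$-diffsequence, its consecutive gaps being $d,\,s_2-s_1,\,\dots,\,s_k-s_{k-1}$, all of which lie in $D$; and because both $x$ and $T^{d}x$ lie in $V$, the orbit of $x$ hits $U$ at each of these $k+1$ exponents, which completes the inductive step. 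The converse $(ii)\Rightarrow(i)$ is immediate from the case $k=2$: if $\{s_1,s_2\}$ is a $D$-diffsequence with $U\cap T^{-s_1}U\cap T^{-s_2}U\neq\emptyset$, then translating a point of this intersection by $s_1$ witnesses $U\cap T^{-(s_2-s_1)}U\neq\emptyset$ with $s_2-s_1\in D$.

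For $(ii)\Rightarrow(iii)$, fix $k$ and $r$; it suffices to show every $r$-coloring of $\Z$ admits a monochromatic $k$-term $D$-diffsequence, and by Lemma \ref{lem:corr} applied to $\mathcal{A}=\mathcal{DS}_k(D)$ this reduces to verifying condition $(iii)$ of that lemma. But given a minimal system and a clopen cover, one simply applies $(ii)$ to any nonempty member of the cover, producing the required $D$-diffsequence; ranging over all $k$ and $r$ gives that $D$ is accessible. The implication $(iii)\Rightarrow(iv)$ is trivial, taking monochromatic $2$-term $D$-diffsequences. Finally, for $(iv)\Rightarrow(i)$, note that $(iv)$, after restricting partitions of $\Z$ to $\N$, is exactly condition $(i)$ of Lemma \ref{lem:corr} for $\mathcal{A}=\mathcal{DS}_2(D)$, so the lemma gives its condition $(ii)$; given a minimal $(X,T)$ and a nonempty open $U$, minimality furnishes a finite $F\subseteq\Z$ with $\{T^{-n}U\}_{n\in F}$ an open cover of $X$, and applying condition $(ii)$ of the lemma to this cover produces $n\in F$, $d\in D$, and a point whose orbit visits $U$ at two times differing by $d$, that is, $U\cap T^{-d}U\neq\emptyset$.

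The main obstacle is the induction in $(i)\Rightarrow(ii)$: one has to notice that single topological recurrence already bootstraps to recurrence along arbitrarily long $D$-diffsequences — which fails in the arithmetic progression setting, where a set of recurrence need not be a set of double recurrence — and to hit on the right way to lengthen a normalized diffsequence, namely by prepending a fresh initial gap $d$ obtained from one more application of $(i)$, this time to the open set $V=\bigcap_{j}T^{-s_j}U$. The remaining implications are routine once Lemma \ref{lem:corr} and the density of orbits in a minimal system are in hand.
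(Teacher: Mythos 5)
Your proposal is correct and follows essentially the same route as the paper: the key step $(i)\Rightarrow(ii)$ is the same bootstrapping of single recurrence applied to the nonempty open intersection (you prepend the new gap $d$ where the paper appends it, a cosmetic difference), $(ii)\Rightarrow(iii)$ and $(iv)\Rightarrow(i)$ both run through Lemma \ref{lem:corr} together with the finite cover of $X$ by translates $T^{-n}U$ of a nonempty open set in a minimal system, and $(iii)\Rightarrow(iv)$ is trivial in both. The only differences are of presentation (direct arguments versus the paper's contrapositives, and factoring $(iv)\Rightarrow(i)$ through the lemma rather than redoing the orbit-coloring construction), so no substantive gap remains.
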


\begin{proof}

We begin by proving that $(i)$ implies $(ii).$ Let $(X,T)$ be a minimal system and let $U \subseteq X$ be an open set. Then, there exists $d_1 \in D$ such that $U \cap T^{-d_1}U  \not = \emptyset$. Since $U \subseteq X$ is open and $T : X \to X$ is a homeomorphism, $U \cap T^{-d_1}U$ is an open set of $X$. Thus, there exists $d_2 \in D$ such that $(U \cap T^{-d_1}U) \cap T^{-d_2}(U \cap T^{-d_1}U) \not = \emptyset$. In particular, $U \cap T^{-d_1}U \cap T^{-d_1-d_2}U$ is an open set of $X$. Iterating this process gives, for any $k \geq 1$, a $k$-term $D$-diffsequence $\{s_1,s_2,\dots,s_k\} \in \mathcal{DS}_k(D)$ such that $$U \cap T^{-s_1}U \cap \dotsb \cap T^{-s_k}U \not = \emptyset,$$  where $s_j=\sum_{i=1}^{j}d_i$. 

Now, we prove that $(ii)$ implies $(iii)$. Recall that $\mathcal{DS}_k(D)$ is a translation invariant collection of finite subsets of $\Z$, that is, the set of all $k$-term $D$-diffsequences in $\Z$ is translation invariant. Assume, for the sake of contradiction, that $D$ is not $r$-accessible for some $r \in \N$. Then, there exists a partition $\Z=C_1 \cup C_2 \cup \dotsb \cup C_r$ of $\Z$ and a positive integer $k$ such that for every $1 \leq i \leq r$ and every $S\in \mathcal{DS}_k(D)$, we have $S \not \subseteq C_i$. Thus, by Lemma \ref{lem:corr}, there exists a minimal system $(X,T)$ and a clopen cover $\{U_i\}_{i=1}^{r}$ of $X$ such that for every $1 \leq i \leq r$ and every $S \in \mathcal{DS}_k(D)$, we have $\bigcap_{s \in S}T^{-s}U_i=\emptyset.$ In other words, there exists an open set $U_i$ of $X$ such that for every $k$-term $D$-diffsequence $\{s_1,s_2,\dots,s_k\} \in \mathcal{DS}_k(D),$ $$U_i \cap T^{-s_1}U_i \cap \dotsb \cap T^{-s_k}U_i = \emptyset,$$ contradicting $(ii)$.

The fact that $(iii)$ implies $(iv)$ follows immediately from the definition of accessibility.

Finally, we prove that $(iv)$ implies $(i)$. Let $D$ be chromatically intersective. Assume, for the sake of contradiction, that $D \subseteq \N$ is not a set of topological recurrence. Then, there exists a minimal system $(X,T)$ and an open set $U \subseteq X$ such that for every $d \in D$, $U \cap T^{-d}U = \emptyset$. Since $(X,T)$ is a minimal system, $\bigcup_{n=0}^{\infty}T^{-n}U=X$. Since $T$ is a homeomorphism and $X$ is compact, there exists $r \in \N$ such that $\bigcup_{n=0}^{r-1}T^{-n}U=X$. Let $x \in X$ be arbitrary. Let $C_1=\{n \in \N: T^nx \in U\}$ and let $C_i=\{n \in \N : T^nx \in T^{-i+1}U\} \setminus \bigcup_{k=1}^{i-1}C_k$ for $2 \leq i \leq r$. It follows that $\N=C_1 \cup C_2  \cup \dotsb \cup C_{r}$ is a partition of $\N$. Since we assumed $D$ is not a set of topological recurrence, there exists no $i \in \{1,2,\dots,r\}$ and $a,b \in C_i$ such that $a-b \in D$, and thus $D$ is not chromatically intersective, a contradiction. This completes our proof. \end{proof}

With the equivalence between accessibility and topological recurrence explicitly stated there are now many interesting examples of sets that are accessible (sets of topological recurrence), as well as sets that are accessible but not large (sets of topological recurrence but not multiple topological recurrence). Indeed, Furstenberg constructed a set of topological recurrence that is not a set of $2$-topological recurrence in \cite{furstenberg2014recurrence} (Chapter 9, Pages 177-178). This result was later reformulated by Jungi\'c to answer a conjecture of Brown asking if every accessible set is large \cite{jungic2005conjecture}. In \cite{frantzikinakis2006sets}, the authors prove that for any $k \geq 2$ and $\alpha \in \R$, the set $D=\{n \in \N : \{\alpha n^k \} \in [1/4,3/4]$\} is a set of $(k-1)$-topological recurrence but not a set of $k$-topological recurrence. Many more interesting examples of sets of topological recurrence (and thus accessible sets) can be found in \cite{bourgain1987ruzsa,frantzikinakis2006sets,furstenberg2014recurrence,griesmer2019recurrence,griesmer2021separating,griesmer2024set,mccutcheon1995three}.

\begin{remark}

We say that $D \subseteq \N$ is a set of \textit{$\ell$-measurable recurrence} if for every measure preserving system $(X,\mathscr{B},\mu,T)$ and any measurable set $A \in \mathscr{B}$ with $\mu(A)>0$, there exists $d \in D$ such that $$\mu(A \cap T^{-d}A \cap \dotsb \cap T^{-\ell d}A)>0.$$ Similarly to the case of topological recurrence, we say that $D$ is a set of \textit{measurable recurrence} if $D$ is a set of $1$-measurable recurrence and a set of \textit{multiple measurable recurrence} if $D$ is a set of $\ell$-measurable recurrence for every $\ell \geq 1$. It is well known that any set of $\ell$-measurable recurrence is a set of $\ell$-topological recurrence. However, the converse need not be true, as K{\v{r}}{\'\i}{\v{z}} constructed a set of topological recurrence that is not a set of measurable recurrence in \cite{kvrivz1987large} (an exposition of this construction is given in \cite{mccutcheon1995three}). We mention this since it follows that sets of multiple measurable recurrence are large, and hence interesting examples of large sets can be found in \cite{bergelson2000ergodic} (Theorems 0.9 and 0.10), \cite{bergelson2008intersective} (Theorems 1.3 and 1.4), and \cite{tsinas2023joint} (Corollaries 1.5 and 1.6).
    
\end{remark}

Of course, any property possessed by sets of topological recurrence must also be a property of accessible sets. Consider, for example, the following simple property possessed by sets of $k$-topological recurrence (a simple proof is given in \cite{host2016variations}, Proposition 6.2).

\begin{theorem}

If $D=D_1 \cup D_2$ is a set of $k$-topological recurrence, then either $D_1$ or $D_2$ is a set of $k$-topological recurrence.
    
\end{theorem}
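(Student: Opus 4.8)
The plan is to argue by contradiction using a product system, as in the proof of Proposition 6.2 of \cite{host2016variations}; alternatively, for $k=1$ the statement follows from the equivalence of accessibility with chromatic intersectivity in Theorem \ref{theorem :recurrenceeq}$(iv)$ together with a product colouring, and for general $k$ one can reach it through Lemma \ref{lem:corr}, applied to the translation-invariant family of $(k+1)$-term arithmetic progressions with common difference in $D$, followed by a product colouring. For the dynamical proof, suppose $D=D_1\cup D_2$ is a set of $k$-topological recurrence while neither $D_1$ nor $D_2$ is; we may assume both $D_1$ and $D_2$ are nonempty, since otherwise the conclusion is immediate. By the definition of $k$-topological recurrence there exist a minimal system $(X_1,T_1)$ and a nonempty open $U_1\subseteq X_1$ with $U_1\cap T_1^{-d}U_1\cap\dotsb\cap T_1^{-kd}U_1=\emptyset$ for every $d\in D_1$, and likewise a minimal system $(X_2,T_2)$ and a nonempty open $U_2\subseteq X_2$ with $U_2\cap T_2^{-d}U_2\cap\dotsb\cap T_2^{-kd}U_2=\emptyset$ for every $d\in D_2$.

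Next I would form the product system $(X_1\times X_2,\ T_1\times T_2)$, which is again a topological dynamical system, and pass to a minimal subsystem $(X,S)$ (with $S$ the restriction of $T_1\times T_2$ to $X$), whose existence follows from Zorn's Lemma exactly as in the proof of Lemma \ref{lem:corr}. Let $\pi_1,\pi_2$ be the coordinate projections of $X$ onto $X_1,X_2$. Each $\pi_j$ is continuous and satisfies $\pi_j\circ S=T_j\circ\pi_j$, so $\pi_j(X)$ is a nonempty closed $T_j$-invariant subset of $X_j$, hence all of $X_j$ by minimality; thus $\pi_1$ and $\pi_2$ are onto, and the sets $V_j:=\pi_j^{-1}(U_j)$, $j=1,2$, are nonempty and open in $X$. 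Since $S^{-id}V_1=\pi_1^{-1}(T_1^{-id}U_1)$ and preimages commute with intersections,
$$\bigcap_{i=0}^{k}S^{-id}V_1=\pi_1^{-1}\!\Big(\bigcap_{i=0}^{k}T_1^{-id}U_1\Big)=\pi_1^{-1}(\emptyset)=\emptyset\qquad\text{for every }d\in D_1,$$
and symmetrically $\bigcap_{i=0}^{k}S^{-id}V_2=\emptyset$ for every $d\in D_2$. Because $S$ is a homeomorphism, the same emptiness holds with $S^{-m}V_1$ in place of $V_1$ for any $m\in\Z$ (and likewise for $V_2$).

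Finally I would glue the two pieces into a single open set. The obstacle is that $V_1$ and $V_2$ may be disjoint, since the minimal set $X$ can miss the rectangle $U_1\times U_2$ altogether, so $V_1\cap V_2$ need not work. Minimality repairs this: for any $x\in V_2$ the orbit $\{S^nx:n\in\Z\}$ is dense in $X$ and hence meets $V_1$, giving some $m$ with $x\in S^{-m}V_1$, so that $W:=S^{-m}V_1\cap V_2$ is nonempty and open. From $W\subseteq S^{-m}V_1$ we get $\bigcap_{i=0}^{k}S^{-id}W=\emptyset$ for every $d\in D_1$, and from $W\subseteq V_2$ the same holds for every $d\in D_2$; thus $W\cap S^{-d}W\cap\dotsb\cap S^{-kd}W=\emptyset$ for every $d\in D=D_1\cup D_2$. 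This contradicts the hypothesis that $D$ is a set of $k$-topological recurrence, applied to the minimal system $(X,S)$ and the nonempty open set $W$. I expect the gluing step -- choosing the $S$-translate of $V_1$ that meets $V_2$ while retaining the no-recurrence property -- to be the only point requiring real care; everything else is routine manipulation of preimages together with the intertwining relations $\pi_j\circ S=T_j\circ\pi_j$.
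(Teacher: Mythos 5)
Your proof is correct. Note that the paper does not actually prove this statement: it quotes it and points to Proposition 6.2 of \cite{host2016variations}, so there is no in-paper argument to compare against line by line. The product construction you give is the standard (essentially Furstenberg-style) proof, and you handle the two points where such arguments usually go wrong: first, after passing to a minimal subsystem $(X,S)$ of $(X_1\times X_2,\,T_1\times T_2)$, the factor maps $\pi_j$ are surjective because $\pi_j(X)$ is a nonempty closed $T_j$-invariant set and $(X_j,T_j)$ is minimal, so $V_j=\pi_j^{-1}(U_j)$ is genuinely nonempty and open and inherits the emptiness $\bigcap_{i=0}^{k}S^{-id}V_j=\emptyset$ for $d\in D_j$ via $S^{-id}V_j=\pi_j^{-1}(T_j^{-id}U_j)$; second, since $V_1\cap V_2$ may be empty, you use minimality of $(X,S)$ to find $m\in\Z$ with $W=S^{-m}V_1\cap V_2\neq\emptyset$, and the translate $S^{-m}V_1$ retains the non-recurrence property because $S^{-m}$ commutes with the $S^{-id}$ and is injective. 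Applying the definition of $k$-topological recurrence to $(X,S)$ and $W$ then yields the contradiction, exactly as you say. Your sketched combinatorial alternative is also sound: by Lemma \ref{lem:corr} applied to the translation-invariant family of $(k+1)$-term arithmetic progressions with gap in $D$, $k$-topological recurrence is equivalent to the corresponding colouring statement for all finite colourings, and product colourings do work for arithmetic progressions (in contrast to diffsequences, as the remark after Corollary \ref{cor:3} notes), so that route gives the same conclusion; the dynamical proof you wrote out is the one that matches the cited reference in spirit.
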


This simple fact about sets of $k$-topological recurrence implies both Theorem \ref{theorem :largeunion} and the following corollary.

\begin{corollary}
\label{cor:3}
If $D=D_1 \cup D_2$ is accessible, then either $D_1$ or $D_2$ is accessible.
    
\end{corollary}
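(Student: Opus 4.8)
The plan is to obtain Corollary~\ref{cor:3} as an immediate consequence of the equivalence between accessibility and topological recurrence just established. By Theorem~\ref{theorem :recurrenceeq}, parts $(i)$ and $(iii)$, a set $E \subseteq \N$ is accessible if and only if it is a set of topological recurrence, and ``set of topological recurrence'' means ``set of $1$-topological recurrence.'' So the argument runs: assume $D = D_1 \cup D_2$ is accessible; then $D$ is a set of $1$-topological recurrence; apply the union theorem for sets of $k$-topological recurrence stated immediately above this corollary, specialized to $k = 1$, to conclude that $D_1$ or $D_2$ is a set of $1$-topological recurrence; finally translate back through Theorem~\ref{theorem :recurrenceeq} to get that $D_1$ or $D_2$ is accessible.

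The only thing to carry out with any care is the bookkeeping: one should state explicitly which direction of the equivalence in Theorem~\ref{theorem :recurrenceeq} is invoked at each step (first $(iii) \Rightarrow (i)$, then $(i) \Rightarrow (iii)$), and note that the cited union theorem applies verbatim because topological recurrence is by definition $1$-topological recurrence. No new estimates, colorings, or dynamical constructions are needed here; all of that work is already packaged in Theorem~\ref{theorem :recurrenceeq} and in the union theorem for $k$-topological recurrence.

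It is worth recording why the dynamical detour is the natural route. A direct combinatorial attempt would take an $r_1$-coloring of $\N$ with no long monochromatic $D_1$-diffsequence together with an $r_2$-coloring with no long monochromatic $D_2$-diffsequence, and try to combine them into an $r_1 r_2$-coloring with no long monochromatic $D$-diffsequence via the product coloring. This breaks down because a monochromatic $D$-diffsequence $x_1, x_2, \dots, x_k$ may have consecutive gaps alternating between $D_1$ and $D_2$, so it need not contain a long $D_1$-diffsequence or a long $D_2$-diffsequence as a sub-diffsequence; one cannot split the union problem at the level of colorings. Passing to minimal systems reformulates the question as a property of open (clopen) covers rather than of finite diffsequences, where the union statement is tractable. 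Consequently I expect the genuine ``main obstacle'' to have already been absorbed into the proof of the union theorem for $k$-topological recurrence cited above, leaving Corollary~\ref{cor:3} itself as a one-line deduction — which is precisely the payoff of having proved Theorem~\ref{theorem :recurrenceeq}.
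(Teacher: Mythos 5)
Your argument is correct and is exactly the paper's route: the corollary is deduced from the union theorem for sets of $k$-topological recurrence (with $k=1$) via the equivalence of $(i)$ and $(iii)$ in Theorem~\ref{theorem :recurrenceeq}. Your added remark about why the naive product-coloring argument fails matches the paper's own remark following the corollary, so nothing is missing.
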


\begin{remark}

Corollary \ref{cor:3} is easy to prove if ``accessible'' is replaced by ``large'': If $D_1$ is not $r$-large and $D_2$ is not $s$-large, then given colorings $\chi_1 : \N \to [r]$ and $\chi_2 : \N \to [s]$ that avoid arbitrarily long monochromatic $D_1$-APs and $D_2$-APs, respectively, their standard product coloring $\chi : \N \to [r] \times [s]$ defined by $\chi(n)=(\chi_1(n),\chi_2(n))$ avoids arbitrarily long monochromatic $D_1 \cup D_2$-APs. However, this product coloring does not necessarily work when considering $D_1 \cup D_2$-diffsequences unless we have the added assumption that $D_1+D_2 \subseteq D_1$ or $D_1 + D_2 \subseteq D_2$ (\cite{landman2014ramsey}, Lemma 10.24).
    
\end{remark}

In \cite{host2016variations}, the authors give a dynamical formulation for $r$-largeness. Namely, $D \subseteq \N$ is \text{$r$-large} if for every topological dynamical system $(X,T)$, every open cover $\{U_i\}_{i=1}^{r}$ of $X$ with $r$ open sets, and every $k \geq 1$, there exists $i \in \{1,2,\dots,r\}$ and $d \in D$ such that $$U_i \cap T^{-d}U_i  \cap \dotsb \cap T^{-kd}U_i \not =\emptyset.$$ In a similar fashion, we can give a dynamical formulation for $r$-accessibility.

\begin{definition}[$r$-accessibility, topological version]
\label{definition :racctop}

A set $D \subseteq \N$ is \textit{$r$-accessible} if for every minimal system $(X,T)$, every clopen cover $\{U_i\}_{i=1}^{r}$ of $X$ with $r$ clopen sets, and every $k \geq 1$, there exists $i \in \{1,2,\dots,r\}$ and a $k$-term $D$-diffsequence $\{s_1,s_2, \dots ,s_k\} \in \mathcal{DS}_k(D)$ such that $$U_i \cap T^{-s_1}U_i \cap \dotsb \cap T^{-s_k}U_i \not = \emptyset.$$ 
    
\end{definition}

\begin{remark}

That this coincides with our original definition of $r$-accessibility follows from Lemma \ref{lem:corr} (let $\mathcal{A}=\mathcal{DS}_k(D)$ for $k \geq 1$). In addition, as seen in the proof of Lemma \ref{lem:corr}, one may assume the sets $\{U_i\}_{i=1}^{r}$ in the cover  of $X$ are pairwise disjoint. Similarly, in the topological formulation of $r$-largeness, one need only consider minimal systems $(X,T)$ that possess a clopen cover $\{U_i\}_{i=1}^{r}$ of $X$ into $r$ pairwise disjoint clopen sets.

Recall the $2$-Large Conjecture, which asserts that if $D \subseteq \N$ is $2$-large, then $D$ is large \cite{brown1999set}. This conjecture can be restated in the language of topological dynamics.

\begin{conjecture}[2-Large Conjecture, topological version]

Let $D \subseteq \N$. Assume that for every minimal system $(X,T)$, every partition $X=U_1 \cup U_2$ of $X$ into $2$ clopen sets, and every $k \geq 1$, there exists $i \in \{1,2\}$ and $d \in D$ such that $$U_i \cap T^{-d} U_i \cap \dotsb \cap T^{-kd}U_i \not = \emptyset.$$ Then, for every minimal system $(X,T)$, every open set $U \subseteq X$, and every $k \geq 1$, there exists $d \in D$ such that $$U \cap T^{-d} U \cap \dotsb \cap T^{-kd}U \not = \emptyset.$$ 
    
\end{conjecture}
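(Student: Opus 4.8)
The plan is to attack the implication directly in its dynamical form, writing the hypothesis as ``$D$ is topologically $2$-large'' and the desired conclusion as ``$D$ is a set of multiple topological recurrence.'' I would first isolate a localization reduction that turns the conclusion --- recurrence inside a single prescribed open set --- into topological $r$-largeness for a finite $r$ depending only on that open set, and then confront the genuine core, which is to boost the $2$-colour hypothesis up to $r$ colours.

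The reduction is the part I expect to go through cleanly. Fix a minimal system $(X,T)$, a nonempty open set $U \subseteq X$, and $k \geq 1$. By minimality we have $\bigcup_{n \geq 0} T^{-n}U = X$ (this is exactly the covering fact already used in the proof of Theorem \ref{theorem :recurrenceeq}), so by compactness there is a finite $r = r(U)$ for which $\{T^{-n}U\}_{n=0}^{r-1}$ is an open cover of $X$ by $r$ sets. Suppose, for this $r$, that $D$ is topologically $r$-large. Applying the dynamical formulation of $r$-largeness to this cover and this $k$ yields some $0 \leq n \leq r-1$ and some $d \in D$ with $$T^{-n}U \cap T^{-d}T^{-n}U \cap \dotsb \cap T^{-kd}T^{-n}U \neq \emptyset.$$ Since $T^{-jd}T^{-n}U = T^{-n}T^{-jd}U$ and $T^{-n}$ is a bijection, this intersection equals $T^{-n}\left(U \cap T^{-d}U \cap \dotsb \cap T^{-kd}U\right)$, so the set $U \cap T^{-d}U \cap \dotsb \cap T^{-kd}U$ is itself nonempty. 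That is precisely the conclusion for $U$ and $k$. Letting $U$ range over all nonempty open sets, the entire implication therefore reduces to a single boosting step: topological $2$-largeness implies topological $r$-largeness for every $r \geq 2$.

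The main obstacle --- and, I expect, the whole difficulty --- is exactly this boosting step. In the hypothesis we are handed a recurrent configuration inside \emph{one} cell of an arbitrary clopen $2$-partition, but with no control over \emph{which} cell: nothing prevents the premise from always placing the configuration in the cell we happen not to want, and with only two colours there is no mechanism to localize recurrence to a designated piece. To produce $r$-colour recurrence I would try the usual maneuvers --- decomposing a given $r$-cover into the binary partitions $\{U_i, X \setminus U_i\}$, iterating $T$ to spread a configuration across several cells, or passing to a product system $(X \times Y, T \times S)$ to simulate additional colours --- and then attempt to amalgamate the resulting two-colour $D$-progressions into a single monochromatic $r$-colour one.

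Each of these attacks stalls at the same point: I see no way to force the ``correct'' cell without already assuming more than $2$-largeness. Via the correspondence of Lemma \ref{lem:corr}, this residual boosting step is nothing other than the combinatorial $2$-Large Conjecture itself, which remains open. Thus the honest status of the proposal is that the localization reduction is routine and shows the conclusion follows from topological $r$-largeness for each $r$, while the step from two colours to $r$ colours is the hard core where any genuine proof must introduce a new idea.
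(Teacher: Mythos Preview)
The statement you were asked to prove is a \emph{conjecture}, not a theorem: the paper presents it explicitly as the topological reformulation of the open $2$-Large Conjecture of Brown, Graham, and Landman, and offers no proof. There is thus no ``paper's own proof'' to compare against.

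Your analysis is in fact correct and matches the paper's intent. The localization reduction you give --- covering $X$ by finitely many translates $T^{-n}U$ via minimality and compactness, then pulling a $D$-AP back through $T^{-n}$ --- is valid and shows that the conclusion (multiple topological recurrence) would follow from topological $r$-largeness for every $r$. Your identification of the residual ``boosting step'' (from $2$ colours to $r$ colours) as the genuine obstruction is exactly right: via the correspondence of Lemma~\ref{lem:corr}, that step is equivalent to the combinatorial $2$-Large Conjecture, which remains open. The paper makes precisely this point in the remark containing the conjecture, noting that the dynamical and combinatorial formulations coincide through Lemma~\ref{lem:corr}. So your proposal does not contain a gap so much as it correctly locates where any proof would have to supply a new idea, and honestly reports that you do not have one --- which is the current state of the problem.
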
  
    
\end{remark}

We now expand some on the family of colorings used in Theorem \ref{theorem :raccgrowth}, Proposition \ref{cor:fibAP}, and Proposition \ref{cor:fibdiff}. Let $\mathbb{T}=\R/\Z$ be the torus (real numbers modulo $1$). $\mathbb{T}$ can be equipped with the metric $d(x,y)=\min(|x-y|,1-|x-y|)$ for all $x,y \in \mathbb{T}$, or equivalently $d(x,y)=\min(\{x-y\},1-\{x-y\})$ for all $x,y \in \R$. This turns $\mathbb{T}$ into a compact metric space. Let $C_1 \cup C_2 = \mathbb{T}$ be a partition of the torus such that there exists an open set $U \subseteq \mathbb{T}$ with either $U \subseteq C_1$ or $U \subseteq C_2$. Let $\alpha \in \mathbb{T}$. Let $T_{\alpha}: \mathbb{T \to \mathbb{T}}$ be defined by $T_\alpha(x)=x+\alpha$. Let $x_0 \in X$. Define a $2$-coloring $\chi : \mathbb{N} \to \{1,2\}$ of $\N$ by $$\chi(n)=\begin{cases}
    1 : T_\alpha^{n}(x_0) \in C_1 \\
    2: T_\alpha^n(x_0) \in C_2.
\end{cases}$$ This is just a generalization of the colorings given in the remark following Proposition \ref{cor:fibdiff}, where we had $C_1=\left[0,\frac{1}{2}\right)$, $C_2=\left[\frac{1}{2},1\right)$, and $x_0=0$. 

\begin{definition}

A set $D \subseteq \N$ is called $\operatorname{Bohr}(n)$ if for every $\alpha_1,\alpha_2,\dots,\alpha_n \in \mathbb{T}$ and every $\varepsilon>0$, there exists $d \in D$ such that $\|\alpha_1 d\| < \varepsilon,\|\alpha_2 d\| < \varepsilon,\dots,\|\alpha_n d\| <\varepsilon$.

\end{definition}

An identical argument to that of Theorem 8.10 in \cite{farhangi2016refinements} shows $D \subseteq \N$ is $\operatorname{Bohr}(1)$ if and only if every $2$-coloring defined as above admits arbitrarily long monochromatic $D$-APs. It is known that $\operatorname{Bohr}(1)$ sets are not necessarily $2$-large \cite{farhangi2016refinements}; however, we wonder if they are $2$-accessible, that is, is it true that $$\{D \subseteq \N : D \text{ is } 2\text{-large}\} \subseteq \{D \subseteq \N : D \text{ is } \operatorname{Bohr}(1)\} \subseteq \{D \subseteq \N : D \text{ is } 2\text{-accessible}\}?$$

A very interesting open problem is that of determining if the primes are $2$-accessible (a simple $3$-coloring shows that they are not $3$-accessible \cite{landman2007avoiding}). Consider the topological dynamical system $(\mathbb{T},T_\alpha)$. Let $P$ be the set of prime numbers. We claim that for every open cover $\{U_1,U_2\}$ of $\mathbb{T}$ and every $k \geq 1$, there exists $i \in \{1,2\}$ and a $k$-term $P$-diffsequence $\{s_1,s_2,\dots,s_k\} \in \mathcal{DS}_k(P)$ such that $$U_i \cap T_{\alpha}^{-s_1}U_i \cap \dotsb \cap T_{\alpha}^{-s_k}U_i \not = \emptyset.$$ Indeed, if $\alpha$ is irrational, then our claim follows immediately from the fact that for any $x \in \mathbb{T}$, the set $T_\alpha^{p}(x)=\{p \alpha +x\}$ is dense in $\mathbb{T}$ as $p$ ranges through the primes \cite{vaughan1977distribution}. Now, if $\alpha=a/b$ is a rational in lowest terms, then $\mathcal{O}_{T_{\alpha}}(0)=\{0,T_{\alpha}(0),\dots,T_{\alpha}^{b-1}(0)\}$. Suppose that $T_{\alpha}^{k}(0),T_{\alpha}^{k+1}(0) \in U_i$ for some $i \in \{1,2\}$. By Dirichlet's Theorem \cite{apostol1998introduction}, for any $b>1$ there exist infinitely many primes $p \equiv -1 \pmod{b}$ and $p \equiv 1 \pmod{b}$. Thus, since $T_{\alpha}^{k+p}(0)=T_{\alpha}^{k+1}(0)$ if $p \equiv 1 \pmod{b}$ and $T_{\alpha}^{k+1+p}(0)=T_{\alpha}^{k}(0)$ if $p \equiv -1 \pmod{b}$, the result follows. Otherwise, without loss of generality, $0 \in U_1$, $T_{\alpha}(0) \in U_2$, $T_{\alpha}^{2}(0) \in U_1, T_{\alpha}^{3}(0) \in U_2$, and so on. Thus, the result follows since $2$ is a prime (this is just a slightly different formulation of the remark at the end of \cite{landman2010avoiding}). We wonder if this is enough to deduce the primes are $2$-accessible.

\begin{question}

Let $D \subseteq \N$. Assume that for every topological dynamical system $(\mathbb{T},T_\alpha)$, every open cover $\{U_1,U_2\}$ of $\mathbb{T}$, and every $k \geq 1$, there exists $i \in \{1,2\}$ and a $k$-term $D$-diffsequence $\{s_1,s_2,\dots, s_k\} \in \mathcal{DS}_k(D)$ such that $$U_i \cap T_{\alpha}^{-s_1}U_i \cap \dotsb \cap T_{\alpha}^{-s_k}U_i \not = \emptyset.$$ Is $D$ $2$-accessible? 
    
\end{question}

To expand a little more on Question 1, we are essentially asking if it is enough to check only periodic $2$-colorings and $2$-colorings generated by irrational rotations of $\mathbb{T}$ to deduce $2$-accessibility. Informally, given a $2$-coloring $\chi: \N \to \{1,2\}$ of $\N$, we can think of it as an infinite word $\chi \in \{1,2\}^\N$, where $\chi_n=\chi(n)$, and define its complexity function $p(\chi,n)$ to be the number of unique length $n$ strings appearing in $\chi$. Clearly $p(\chi,n) \leq 2^n$, and it is well known that if $p(\chi,n) \leq n$ for any $n \in \N$, then $\chi$ is eventually periodic (\cite{lothaire2002algebraic}, Theorem 1.3.13). Colorings defined by irrational circle rotations (as in Theorem \ref{theorem :raccgrowth}) have low complexity while still being aperiodic. In fact, Sturmian words are words $\omega \in \{1,2\}^\N$ with complexity $p(\omega,n)=n+1$ for all $n \in \N$, and every Sturmian word can be generated by an irrational rotation (\cite{lothaire2002algebraic}, Theorem 2.1.13). It follows from Borel's Normal Number Theorem that almost all (with respect to Lebesgue measure) $r$-colorings $\chi \in \{1,2,\dots,r\}^\N$ have complexity $p(\chi,n)=r^n$ for all $n \in \N$. It seems natural to want to look at $r$-colorings with low complexity when attempting to prove a set is not $r$-accessible. We think the following question is interesting and further motivates Question 1.

\begin{question}

Let $D \subseteq \N$ satisfy $\operatorname{doa}(D) <r.$ Does there exist an $r$-coloring $\chi \in \{1,2,\dots,r\}^\N$ of $\N$ that does not admit arbitrarily long monochromatic $D$-diffsequences and which satisfies $p(\chi,n)=O(n)$? 
    
\end{question}

As seen in the remark following Theorem \ref{theorem :raccgrowth}, there is a connection between the chromatic number, $\chi(\N_D)$, of the distance graph $\N_D$ and the degree of accessibility, $\operatorname{doa}(D)$, of $D$. Indeed, for any $D \subseteq \N$, there exists $C>0$ such that $$\operatorname{doa}(D) < \chi(\N_D) \leq C \operatorname{doa}(D).$$ We wonder what more can be said about this relationship and what changes if $\mathbb{N}$ is replaced with $\mathbb{Z}$ or $\mathbb{R}$. Letting $V_m=\{n \in \N : m \nmid n\}$, we have, as shown in \cite{landman2014ramsey}, that $\operatorname{doa}(V_m)=m-1$. In addition, the coloring used to show $\operatorname{doa}(V_m)<m$ avoids $2$-term monochromatic $V_m$-diffsequences, so $\chi(\N_{V_m})=m$. Thus, $$\frac{\chi(\N_{V_m})}{\operatorname{doa}(V_m)}=\frac{m}{m-1} \to 1 \quad \text{as} \quad m \to \infty.$$ On the other hand, let $D=\{1,2,\dots,m-1\} \cup \{p^n : n \in \N\}$, where $m \geq 3$ and $p>m$ is any prime not dividing $m$. Since $\N_D$ contains a complete subgraph $K_m$, we have $\chi(\N_D) \geq m$. In addition, the $m$-coloring $\chi: \N \to \{0,1,\dots,m-1\}$ defined by $\chi(n)=i$ if and only if $n \equiv i \pmod{m}$ gives us $\chi(\N_D) \leq m$, so $\chi(\N_D)=m$. Finally, by Theorem \ref{theorem :raccgrowth}, $\operatorname{doa}(D)=1$, so $$\frac{\chi(\N_D)}{\operatorname{doa}(D)}=m.$$

\begin{question}

What properties does the set $$X=\left\{\frac{\chi(\N_D)}{\operatorname{doa}(D)} : D \subseteq \N\right\}$$ have? The above two examples show that $\N \cap (1,\infty) \subseteq X$. Is $X=\mathbb{Q} \cap (1,\infty)?$ 
    
\end{question}

\begin{remark}

As shown in \cite{katznelson2001chromatic}, $\chi(\Z_D)=\infty$ if and only if $D$ is a set of topological recurrence. Accepting this, it is easy to see that $D$ is accessible if and only if $D$ is a set of topological recurrence. Indeed, if $D$ is accessible, it is clear that $\chi(\mathbb{Z}_D)=\infty$. Conversely, if $\operatorname{doa}(D)$ is finite, then there exists a finite coloring $\chi : \mathbb{Z} \to [r]$ having all monochromatic $D$-diffsequences of bounded length. Thus, $\chi' : \mathbb{Z} \to [r] \times [\ell]$ defined by $\chi'(n)=(\chi(n),\ell(n))$, where $\ell(n)=$ length of longest monochromatic $D$-diffsequence starting at $n$ and $\ell=\max\{\ell(n) : n \in \Z\}$, is a proper coloring of $\chi(\Z_D)$.
    
\end{remark}

\section{Some Concluding Remarks}

We are not sure whether the condition $d_{n+1} \geq (3+\delta)d_n$ in Corollary \ref{cor:2accgrowth} is the best possible. We hoped to weaken the condition to $d_{n+1} \geq (2+\delta)d_n$, but were unsuccessful. There is of course an added difficulty when we wish to have $\{\alpha d_n\} \in [\varepsilon,1/2]$ as opposed to $\|\alpha d_n\| \geq \varepsilon$. In the latter case, if we have an interval with a small number of `bad' fractions of the form $a/d_n$, then we can remove small $\varepsilon$-neighborhoods centered at these fractions and be left with a union of closed intervals avoiding these bad fractions. If the sequence is lacunary, this idea can be used in a clever way to construct a sequence of nested intervals that converge to some $\alpha \in \R$ with the property that $\|\alpha d_n\| \geq \varepsilon$ for all $n \in \N$. However, if we want $\{\alpha d_n\} \in [\varepsilon,1/2]$, then given any interval, it is not enough to simply remove $\varepsilon$-neighborhoods; given a bad fraction $a/d_n$ in the interval, we need to remove $\left(\frac{a-1/2}{d_n},\frac{a+\varepsilon}{d_n}\right)$, that is, we need to remove intervals of size $\frac{1/2+\varepsilon}{d_n}$ as opposed to intervals of size $\frac{2\varepsilon}{d_n}$. We think this is where most of the added difficulty lies. A somewhat related problem to this (\cite{dubickas2006even}, Problem $5$) asks whether or not there exists $\beta > 2$ such that $\{\alpha  \beta^n\} \geq 1/2$ infinitely often for all $\alpha \not = 0$.

Perhaps there exists some $2$-accessible set $D=\{d_1,d_2,d_3,\dots\}$ such that $d_{n+1} \geq (2+\delta)d_n$ for all $n \geq 1$? For example, it is not known whether the set of Pell numbers, which satisfy the recurrence $p_n=2p_{n-1}+p_{n-2}$, is $2$-accessible or not.

We would also like to determine if the set of Fibonacci numbers is $3$-accessible. We cannot apply Theorem \ref{theorem :raccgrowth} to conclude the set is not $3$-accessible since for any $\varepsilon>0$ and $\alpha \in \R$, it is impossible to have $\{\alpha f_n\} \in \left[\varepsilon,\frac{2}{3}\right]$ for all $n \in \N$. See \cite{dubickas2009approximation}, which proves that for every $\alpha \not \in \mathbb{Q}(\sqrt{5})$, there exists infinitely many $n \in \N$ such that $\{\alpha f_n \} > 2/3$. If $\alpha \in \mathbb{Q}(\sqrt{5})$, then (using the fact that $\sqrt{5}f_n=f_{n-1}+f_{n+1}-2\bar{\phi}^n$) a fairly straightforward argument can show that for any $\varepsilon>0$, it is impossible to have $\{\alpha f_n\} \in \left[\varepsilon,\frac{2}{3}\right]$ for all $n \in \N$.

Finally, it would be interesting to find a set $D$ that is $2$-accessible (and not $2$-large) that does not contain arbitrarily long monochromatic $D$-diffsequences. We have yet to see a proof of a set being $2$-accessible that does not use Corollary \ref{theorem :2acccon} or the fact that $D-D=\{d_2-d_1 : d_1,d_2 \in D, d_2-d_1 >0\}$ is accessible for any infinite set $D$. Perhaps the dynamical formulation of $2$-accessibility could help shed light on some new $2$-accessible sets?

\vskip20pt\noindent {\bf Acknowledgments.} The author extends his heartfelt gratitude to Dr.~Jackie Anderson, whose generous mentorship, insight, and encouragement shaped this work -- originally developed as part of an undergraduate thesis -- and who continues to be a profound source of inspiration. The author also wishes to thank an anonymous referee for highlighting the connections between accessibility and topological recurrence and for their many other valuable suggestions. Part of this work was supported during the summer 
of 2023 through Bridgewater State University's ATP research grant.


\begin{thebibliography}{1}\footnotesize
\providecommand{\url}[1]{\texttt{#1}}
\providecommand{\urlprefix}{URL }

\bibitem{apostol1998introduction}
T.~M. Apostol, \textit{Introduction to Analytic Number Theory}, Springer, New York, NY, 1998.

\bibitem{ardal2008ramsey}
H.~Ardal, D.~S. Gunderson, V.~Jungi{\'c}, B.~M. Landman, and K.~Williamson, Ramsey results involving the {Fibonacci} numbers, \textit{Fibonacci Quart.} \textbf{46} (1) (2008), 10--17.

\bibitem{bergelson1996polynomial}
V.~Bergelson and A.~Leibman, Polynomial extensions of van der {Waerden's} and {Szemer{\'e}di's} theorems, \textit{J. Amer. Math. Soc.} \textbf{9} (3) (1996), 725--753.

\bibitem{bergelson2008intersective}
V.~Bergelson, A.~Leibman, and E.~Lesigne, Intersective polynomials and the polynomial {Szemer{\'e}di} theorem, \textit{Adv. Math.} \textbf{219} (1) (2008), 369--388.

\bibitem{bergelson2000ergodic}
V.~Bergelson and R.~McCutcheon, \textit{An Ergodic IP Polynomial {Szemer{\'e}di} Theorem}, American Mathematical Society, Providence, RI, 2000.

\bibitem{bourgain1987ruzsa}
J.~Bourgain, Ruzsa's problem on sets of recurrence, \textit{Israel J. Math.} \textbf{59} (2) (1987), 150--166.

\bibitem{brown1999set}
T.~C. Brown, R.~L. Graham, and B.~M. Landman, On the set of common differences in van der {Waerden's} theorem on arithmetic progressions, \textit{Canad. Math. Bull.} \textbf{42} (1) (1999), 25--36.

\bibitem{clifton2024new}
A.~Clifton, New bounds on diffsequences, \textit{Discrete Math.} \textbf{347} (5) (2024), {Paper No. 113929, 17}.

\bibitem{de1980numbers}
B.~de~Mathan, Numbers contravening a condition in density modulo 1, \textit{Acta Math. Hungar.} \textbf{36} (3-4) (1980), 237--241.

\bibitem{dubickas2006even}
A.~Dubickas, Even and odd integral parts of powers of a real number, \textit{Glasg. Math. J.} \textbf{48} (2) (2006), 331--336.

\bibitem{dubickas2006fractional}
A.~Dubickas, On the fractional parts of lacunary sequences, \textit{Math. Scand.}  (2006), 136--146.

\bibitem{dubickas2009approximation}
A.~Dubickas, An approximation property of lacunary sequences, \textit{Israel J. Math.} \textbf{170} (2009), 95--111.

\bibitem{farhangi2016refinements}
S.~Farhangi, \textit{On Refinements of Van der {Waerden's} Theorem}, Master's thesis, Virginia Tech, 2016.

\bibitem{farhangi2021distance}
S.~Farhangi and J.~Grytczuk, Distance graphs and arithmetic progressions, \textit{Integers} \textbf{21A} (2021), \#A21.

\bibitem{frantzikinakis2006sets}
N.~Frantzikinakis, E.~Lesigne, and M.~Wierdl, Sets of $k$-recurrence but not $(k+1)$-recurrence, \textit{Ann. Inst. Fourier (Grenoble)} \textbf{56} (4) (2006), 839--849.

\bibitem{furstenberg2014recurrence}
H.~Furstenberg, \textit{Recurrence in Ergodic Theory and Combinatorial Number Theory}, Princeton University Press, Princeton, NJ, 2014.

\bibitem{furstenberg1978topological}
H.~Furstenberg and B.~Weiss, Topological dynamics and combinatorial number theory, \textit{J. Anal. Math.} \textbf{34} (1) (1978), 61--85.

\bibitem{griesmer2019recurrence}
J.~T. Griesmer, Recurrence, rigidity, and popular differences, \textit{Ergodic Theory Dynam. Systems} \textbf{39} (5) (2019), 1299--1316.

\bibitem{griesmer2021separating}
J.~T. Griesmer, Separating topological recurrence from measurable recurrence: Exposition and extension of {Kriz's} example, 2021, preprint, {\tt arXiv:2108.01642}.

\bibitem{griesmer2024set}
J.~T. Griesmer, A set of 2-recurrence whose perfect squares do not form a set of measurable recurrence, \textit{Ergodic Theory Dynam. Systems} \textbf{44} (6) (2024), 1541--1580.

\bibitem{host2016variations}
B.~Host, B.~Kra, and A.~Maass, Variations on topological recurrence, \textit{Monatsh. Math.} \textbf{179} (2016), 57--89.

\bibitem{jungic2005conjecture}
V.~Jungi{\'c}, On a conjecture of {Brown} concerning accessible sets, \textit{J. Combin. Theory Ser. A} \textbf{110} (1) (2005), 175--178.

\bibitem{katznelson2001chromatic}
Y.~Katznelson, Chromatic numbers of {Cayley} graphs on {$\mathbb{Z}$} and recurrence, \textit{Combinatorica} \textbf{21} (2) (2001), 211--220.

\bibitem{khintchine1926klasse}
A.~Khintchine, {\"U}ber eine {Klasse} linearer diophantischer {Approximationen}, \textit{Rend. Circ. Mat. Palermo (2)} \textbf{50} (2) (1926), 170--195.

\bibitem{kvrivz1987large}
I.~K{\v{r}}{\'\i}{\v{z}}, Large independent sets in shift-invariant graphs: Solution of {Bergelson's} problem, \textit{Graphs Combin.} \textbf{3} (1) (1987), 145--158.

\bibitem{landman2007avoiding}
B.~M. Landman and A.~Robertson, Avoiding monochromatic sequences with special gaps, \textit{SIAM J. Discrete Math.} \textbf{21} (3) (2007), 794--801.

\bibitem{landman2014ramsey}
B.~M. Landman and A.~Robertson, \textit{Ramsey Theory on the Integers}, Second Edition, American Mathematical Society, Providence, RI, 2014.

\bibitem{landmanramsey}
B.~M. Landman, A.~Robertson, and Q.~Robertson, Ramsey properties for integer sequences with restricted gaps, \textit{Moscow J. Combin. Number Theory} \textbf{12} (3) (2023), 181--195.

\bibitem{landman2010avoiding}
B.~M. Landman and K.~Ventullo, Avoiding monochromatic sequences with gaps in a fixed translation of the primes, \textit{Util. Math.} \textbf{82} (2010), 207--214.

\bibitem{lothaire2002algebraic}
M.~Lothaire, \textit{Algebraic Combinatorics on Words}, Cambridge University Press, Cambridge, 2002.

\bibitem{mccutcheon1995three}
R.~McCutcheon, Three results in recurrence, in K.~E. Petersen and I.~A. Salama, editors, \textit{Proceedings of the 1993 Alexandria Conference, Ergodic Theory and Its Connections with Harmonic Analysis}, vol. 205 of \textit{LMS Lecture Note Series}, Cambridge University Press, Cambridge, 1995, pp. 349--358.

\bibitem{peres2010two}
Y.~Peres and W.~Schlag, Two {Erd{\H{o}}}s problems on lacunary sequences: Chromatic number and {Diophantine} approximation, \textit{Bull. Lond. Math. Soc.} \textbf{42} (2) (2010), 295--300.

\bibitem{pollington1979density}
A.~D. Pollington, On the density of sequence $\{n_{k}\xi\}$, \textit{Illinois J. Math.} \textbf{23} (4) (1979), 511--515.

\bibitem{robertson2020down}
A.~Robertson, {Down the large rabbit hole}, \textit{Rocky Mountain J. Math.} \textbf{50} (1) (2020), 237--253.

\bibitem{ruzsa2002distance}
I.~Z. Ruzsa, Z.~Tuza, and M.~Voigt, Distance graphs with finite chromatic number, \textit{J. Combin. Theory Ser. B} \textbf{85} (1) (2002), 181--187.

\bibitem{tsinas2023joint}
K.~Tsinas, Joint ergodicity of {Hardy} field sequences, \textit{Trans. Amer. Math. Soc.} \textbf{376} (05) (2023), 3191--3263.

\bibitem{vaughan1977distribution}
R.~C. Vaughan, On the distribution of $\alpha p$ modulo 1, \textit{Mathematika} \textbf{24} (2) (1977), 135--141.

\bibitem{walters2000combinatorial}
M.~Walters, Combinatorial proofs of the polynomial van der {Waerden} theorem and the polynomial {Hales--Jewett} theorem, \textit{J. Lond. Math. Soc.} \textbf{61} (1) (2000), 1--12.

\bibitem{wesley2022improved}
W.~J. Wesley, Improved {Ramsey}-type theorems for {Fibonacci} numbers and other sequences, \textit{Integers} \textbf{25A} (2025), \#A29.

\end{thebibliography}
\end{document}